\newtheorem{thm}{Theorem}[section]
\newtheorem{lem}{Lemma}[section]
\newtheorem{coroll}{Corollary}[section]
\theoremstyle{definition}
\newtheorem{defn}{Definition}[section]
\theoremstyle{remark}
\newtheorem{remark}{Remark}[section]
\newtheorem{example}{Example}[section]
\DeclareMathOperator*{\esssup}{ess\,sup}
\definecolor{cucol}{rgb}{0,0,0.8}
\definecolor{afcol}{rgb}{1,0,0}
\begin{document}


\title{Analytical Development of Incomplete Riemann--Liouville Fractional Calculus}

\date{}


\author[1]{Arran Fernandez\thanks{Corresponding author. Email: \texttt{arran.fernandez@emu.edu.tr}}}
\author[2]{Ceren Ustao\u{g}lu}
\author[1]{Mehmet Ali \"Ozarslan}

\affil[1]{{\small Department of Mathematics, Faculty of Arts and Sciences, Eastern Mediterranean University, Gazimagusa, TRNC, Mersin 10, Turkey}}
\affil[2]{{\small Department of Computer Engineering, Faculty of Engineering, Final International University, Kyrenia, TRNC, Mersin 10, Turkey}}


\maketitle


\begin{abstract}
\noindent The theory of fractional calculus has developed in a number of directions over the years, including:
\begin{itemize}
\item the formulation of multiple different definitions of fractional differintegration,
\item the extension of various properties of standard calculus into the fractional scenario,
\item the application of fractional differintegrals to assorted special functions.
\end{itemize}

Recently, a new variant of fractional calculus has arisen, namely incomplete fractional calculus. In two very recent papers, incomplete versions of the Riemann--Liouville and Caputo fractional differintegrals have been formulated and applied to several important special functions. However, this recent development in the field still requires further analysis. 

In the current work, we develop the theory of incomplete fractional calculus in more depth than has been done before, investigating the further properties of the incomplete Riemann-Liouville fractional differintegrals and answering some fundamental questions about how these operators work.

By considering appropriate function spaces, we formulate rigorously the definitions of incomplete Riemann-Liouville fractional integration, and justify how this model may be used to analyse a wider class of functions than classical fractional calculus can consider. By using the idea of analytic continuation from complex analysis, we formulate definitions for incomplete Riemann-Liouville fractional differentiation, hence extending the incomplete integrals to a fully-fledged model of fractional calculus.

We also investigate and analyse these operators further, in order to prove new properties of the incomplete Riemann--Liouville fractional calculus. These include a Leibniz rule for incomplete differintegrals of products, and composition properties of incomplete differintegrals with classical calculus operations. These are natural and expected issues to investigate in any new model of fractional calculus, and in the incomplete Riemann--Liouville model the results emerge naturally from the definition previously proposed.
\end{abstract}

\section{Introduction} \label{sec:intro}

The field of fractional calculus has its roots in the question, posed by L'Hopital to Leibniz in the 17th century, of what would happen to the operation of multiple differentiation $\frac{\mathrm{d}^ny}{\mathrm{d}x^n}$ if the order $n$ were taken to be $\frac{1}{2}$. During the 18th and 19th centuries, this question, and also the broader issue of extending $n$ to any real or complex value, was answered in a number of different ways. Thus, several competing definitions were created for fractional differentiation and integration (often referred to together as fractional \textit{differintegration}). These included what are now referred to as the Riemann--Liouville and Gr\"unwald--Letnikov models of fractional calculus. For a more detailed discussion of the history of fractional calculus up to the late 20th century, we refer the reader to \cite{dugowson,miller-ross}.

In more recent decades, interest in the field has been increasing rapidly. Partly this is due to the discovery of practical applications in various areas including fluid dynamics, chaos theory, bioengineering, etc. \cite{hilfer,hristov,magin,mainardi,west}. Partly also the expansion is due to the realisation that the classical definitions of fractional differintegrals are only the tip of the iceberg: dozens of other models can be proposed and analysed, each with their own properties and applications \cite{atangana-baleanu,prabhakar,kilbas-saigo-saxena,ozarslan-ozergin,cetinkaya-kiymaz-agarwal-agarwal,jarad-abdeljawad-alzabut}.

The most frequently used model of fractional calculus is the \textbf{Riemann--Liouville} one, in which fractional integrals are defined using a power-function kernel and fractional derivatives are defined using standard derivatives of fractional integrals:
\begin{alignat}{2}
\label{RLdef:int} \prescript{RL}{a}I^{-\mu}_xf(x)&=\prescript{RL}{a}D^{\mu}_xf(x)\coloneqq\frac{1}{\Gamma(-\mu)}\int_a^x(x-t)^{-\mu-1}f(t)\,\mathrm{d}t,\quad\quad&\mathrm{Re}(\mu)<0; \\
\label{RLdef:deriv} \prescript{RL}{a}D^{\mu}_xf(x)&\coloneqq\frac{\mathrm{d}^n}{\mathrm{d}x^n}\prescript{RL}{a}I^{n-\mu}_xf(x),\quad n\coloneqq\lfloor\mathrm{Re}(\mu)\rfloor+1,\quad\quad&\mathrm{Re}(\mu)\geq0.
\end{alignat}
The definition \eqref{RLdef:int} of Riemann--Liouville (RL) fractional integrals is valid for $x\in(a,b)$ and $f\in L^1(a,b)$, but these are not necessary conditions: we can if desired replace the $L^1$ space by other function spaces such as the space of absolutely continuous functions \cite{miller-ross,samko-kilbas-marichev}. The definition \eqref{RLdef:deriv} of RL fractional derivatives is valid for $x\in(a,b)$ and $f\in C^n(a,b)$, although again these are not the only viable set of conditions to impose \cite{miller-ross,samko-kilbas-marichev}.

Many of the newer alternative models of fractional calculus, some of which we shall discuss in detail below, involve changing the kernel in \eqref{RLdef:int} from a power function to some more complex special function. This is useful because, by using many different kernel functions with different behaviours, we are able to model a wider spectrum of different fractional systems which all behave in different ways.

Some special functions which have a particularly strong connection with fractional calculus are the \textbf{incomplete gamma} and \textbf{incomplete beta} functions, defined as follows. The upper and lower incomplete gamma functions are respectively
\begin{alignat}{2}
\label{IGFdef:upper} \Gamma(\nu,x)&\coloneqq\int_x^{\infty}t^{\nu-1}e^{-t}\,\mathrm{d}t,\quad\quad&\mathrm{Re}(\nu)>0; \\ 
\label{IGFdef:lower} \gamma(\nu,x)&\coloneqq\int_0^xt^{\nu-1}e^{-t}\,\mathrm{d}t,\quad\quad&\mathrm{Re}(\nu)>0.
\end{alignat}
The incomplete beta function is
\begin{equation}
\label{IBFdef} B_y(a,b)\coloneqq\int_0^yt^{a-1}(1-t)^{b-1}\,\mathrm{d}t,\quad\quad0\leq y\leq1,\mathrm{Re}(a)>0,\mathrm{Re}(b)>0.
\end{equation}
To see the significance of these functions in fractional calculus, let us consider the Riemann--Liouville differintegrals of some of the most fundamental elementary functions: namely, exponential functions and power functions. The following results are proved in \cite{miller-ross}:
\begin{alignat}{2}
\label{RLex:exp} \prescript{RL}{a}D^{\mu}_x(e^{\alpha x})&=\frac{\alpha^{\mu}e^{\alpha x}}{\Gamma(-\mu)}\gamma(-\mu,\alpha(x-a)),\quad\quad&&\mu\in\mathbb{C},\alpha\neq0; \\
\label{RLex:power} \prescript{RL}{a}D^{\mu}_x(x^{\alpha})&=\frac{x^{\alpha-\mu}}{\Gamma(-\mu)}B_{\frac{x-a}{x}}(-\mu,\alpha+1),\quad\quad&&\mathrm{Re}(\mu)<0,\mathrm{Re}(\alpha)>-1.
\end{alignat}
When fractional differintegral operators are applied, some of the most basic functions of calculus become relatives of the incomplete gamma and beta functions. Thus, these incomplete functions are in fact fundamental to the field of fractional calculus, and it is worth studying them in more detail to understand the connection between fractionality and incompleteness.

Recently, a new type of fractional calculus was defined which is called \textbf{incomplete Riemann--Liouville} fractional calculus \cite{ozarslan-ustaoglu1}. The underlying idea is to consider the same operation of ``incompletifying'' that leads us from the integrals defining the gamma and beta functions to those defining the incomplete gamma and beta functions, and apply this same operation to the integral \eqref{RLdef:int} defining the Riemann--Liouville fractional integral. This gives rise to the following equivalent expressions for the lower incomplete Riemann--Liouville fractional integral:
\begin{align}
\label{IRLdef:intlower1} \prescript{RL}{0}D_x^{\mu}[f(x);y]&=\frac{1}{\Gamma(-\mu)}\int_0^{yx}(x-t)^{-\mu-1}f(t)\,\mathrm{d}t \\
\label{IRLdef:intlower2} &=\frac{x^{-\mu}}{\Gamma(-\mu)}\int_0^y(1-u)^{-\mu-1}f(ux)\,\mathrm{d}u \\
\label{IRLdef:intlower3} &=\frac{x^{-\mu}y}{\Gamma(-\mu)}\int_0^1(1-wy)^{-\mu-1}f(ywx)\,\mathrm{d}w,\quad\quad\mathrm{Re}(\mu)<0.
\end{align}
And for the upper incomplete Riemann--Liouville fractional integral:
\begin{align}
\label{IRLdef:intupper1} \prescript{RL}{0}D_x^{\mu}\{f(x);y\}&=\frac{1}{\Gamma(-\mu)}\int_{yx}^x(x-t)^{-\mu-1}f(t)\,\mathrm{d}t \\
\label{IRLdef:intupper2} &=\frac{x^{-\mu}}{\Gamma(-\mu)}\int_y^1(1-u)^{-\mu-1}f(ux)\,\mathrm{d}u \\
\label{IRLdef:intupper3} &=\frac{x^{-\mu}y}{\Gamma(-\mu)}\int_0^{1-y}v^{-\mu-1}f((1-v)x)\,\mathrm{d}v,\quad\quad\mathrm{Re}(\mu)<0.
\end{align}
In the seminal work \cite{ozarslan-ustaoglu1}, the incomplete Riemann--Liouville fractional integral operators were applied to some elementary and special functions, as example results to establish their validity. This paper was followed by another \cite{ozarslan-ustaoglu2} in which variants of Caputo type were defined for these operators. Thus, the field of incomplete fractional calculus has been opened for investigation. There is still much to be done in this field, ranging from fundamental properties such as the function spaces on which the operators can be defined, to more advanced results such as Leibniz's rule. In the current work, we aim to investigate and establish a number of results concerning the already defined incomplete Riemann--Liouville fractional integrals, and also to introduce and analyse some related operators of incomplete fractional type.


\section{A rigorous analysis of incomplete Riemann--Liouville fractional calculus} \label{sec:rigour}

\subsection{Function spaces for the fractional integrals}

It is known that the standard Riemann--Liouville fractional integral \eqref{RLdef:int} is defined for $x\in[a,b]$ and $f\in L^1[a,b]$. For the incomplete Riemann--Liouville fractional integrals \eqref{IRLdef:intlower1}--\eqref{IRLdef:intupper3}, we have taken the lower bound to be $a=0$, so it can be assumed that $x$ lies in a fixed interval $[0,b]$. In order to formulate a fully rigorous definition, we also need to consider the conditions on the function $f$, and specify a function space for $f$ such that the incomplete RL fractional integrals of $f$ are well-defined.

\begin{thm}
\label{IRL:L1lower}
If $b>0$ and $0<y<1$ and $\mu\in\mathbb{C}$ with $\mathrm{Re}(\mu)>0$, then the $\mu$th lower incomplete Riemann--Liouville fractional integral defines a bounded operator \[\prescript{RL}{0}D^{-\mu}[\cdot;y]:L^1[0,yb]\rightarrow L^1[0,b].\]
\end{thm}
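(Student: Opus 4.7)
The plan is to estimate $\|\prescript{RL}{0}D^{-\mu}[\cdot\,;y]f\|_{L^1[0,b]}$ directly in terms of $\|f\|_{L^1[0,yb]}$ by applying Tonelli--Fubini to the iterated integral. Since the operator is linear, such a bound will simultaneously yield measurability of the image, finiteness almost everywhere (hence membership in $L^1[0,b]$), and the claimed boundedness of the operator. The starting point is the first representation \eqref{IRLdef:intlower1}, with $\mu$ replaced by $-\mu$, so that the operator acts as
\[
\prescript{RL}{0}D^{-\mu}[f(x);y] = \frac{1}{\Gamma(\mu)}\int_0^{yx}(x-t)^{\mu-1}f(t)\,\mathrm{d}t.
\]

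First I would take absolute values and apply Tonelli to the nonnegative integrand $(x-t)^{\mathrm{Re}(\mu)-1}|f(t)|$, exchanging the order of integration. The key geometric observation is that the region $\{(x,t):0<x<b,\ 0<t<yx\}$ is the same as $\{(x,t):0<t<yb,\ t/y<x<b\}$. This rewriting produces
\[
\int_0^b \bigl|\prescript{RL}{0}D^{-\mu}[f(x);y]\bigr|\,\mathrm{d}x \;\leq\; \frac{1}{|\Gamma(\mu)|}\int_0^{yb}|f(t)|\int_{t/y}^b (x-t)^{\mathrm{Re}(\mu)-1}\,\mathrm{d}x\,\mathrm{d}t.
\]

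Next I would evaluate the inner $x$-integral explicitly; since $\mathrm{Re}(\mu)>0$, it equals $\frac{1}{\mathrm{Re}(\mu)}\bigl[(b-t)^{\mathrm{Re}(\mu)}-\bigl(t(1-y)/y\bigr)^{\mathrm{Re}(\mu)}\bigr]$. This is where the hypotheses enter decisively: $\mathrm{Re}(\mu)>0$ ensures that the power-type singularity at $x=t$ is integrable and produces a positive power of the endpoints, while $0<y<1$ together with $t\le yb$ guarantees $t/y\le b$ so the lower endpoint of the inner integral lies inside $[0,b]$. For $t\in[0,yb]$ the bracketed expression is nonnegative and uniformly bounded above by $b^{\mathrm{Re}(\mu)}$, so one obtains
\[
\|\prescript{RL}{0}D^{-\mu}[f(\cdot);y]\|_{L^1[0,b]} \;\leq\; \frac{b^{\mathrm{Re}(\mu)}}{|\Gamma(\mu)|\,\mathrm{Re}(\mu)}\,\|f\|_{L^1[0,yb]},
\]
which is the required operator norm bound.

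I do not anticipate a serious obstacle: the argument parallels the classical proof of the $L^1$-boundedness of the standard Riemann--Liouville integral. The only subtle point is careful bookkeeping around the Fubini region and checking that the new lower $x$-limit $t/y$ lies in $[0,b]$ precisely under the hypothesis $0<y<1$; measurability of the image function then follows from Fubini applied to the signed integrand, whose absolute integrability has just been established. A closing remark would note that the same estimate, being uniform in $f$, immediately gives continuity of the linear operator in the operator-norm topology.
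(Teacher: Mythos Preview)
Your argument is correct. The Tonelli--Fubini swap is legitimate because the integrand $(x-t)^{\mathrm{Re}(\mu)-1}|f(t)|$ is nonnegative and measurable on the region $\{0<t<yx,\ 0<x<b\}$, your description of that region after the swap is accurate, and the inner $x$-integral is computed and bounded correctly.

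The paper, however, proceeds differently. Rather than exchanging the order of integration, it first bounds the kernel pointwise by its supremum over $t\in[0,yx]$, pulling out the factor $\sup_{[0,yx]}(x-t)^{\mathrm{Re}(\mu)-1}$ and leaving $\int_0^{yx}|f(t)|\,\mathrm{d}t\le\|f\|_{L^1[0,yb]}$; it then integrates the resulting pointwise bound in $x$. Because the location of that supremum depends on the sign of $\mathrm{Re}(\mu)-1$, the paper must split into two cases ($0<\mathrm{Re}(\mu)\le1$ with the supremum at $t=yx$, and $\mathrm{Re}(\mu)>1$ with the supremum at $t=0$), obtaining the constants $\tfrac{(1-y)^{\mathrm{Re}(\mu)-1}b^{\mathrm{Re}(\mu)}}{|\Gamma(\mu)|\mathrm{Re}(\mu)}$ and $\tfrac{b^{\mathrm{Re}(\mu)}}{|\Gamma(\mu)|\mathrm{Re}(\mu)}$ respectively. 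Your Fubini route avoids this case distinction entirely and lands directly on the second of these constants, which in fact dominates the paper's Case~1 constant (since $(1-y)^{\mathrm{Re}(\mu)-1}\ge1$ there), so your single bound is uniformly at least as sharp. What the paper's method buys is an intermediate \emph{pointwise} estimate on $\big|\prescript{RL}{0}D_x^{-\mu}[f(x);y]\big|$ for each fixed $x$; your approach trades that for a cleaner one-shot $L^1$ estimate, in the spirit of the classical proof for the ordinary Riemann--Liouville integral.
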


\begin{proof}
Let $f$ be a function defined on $[0,b]$. We need to prove that the $L^1[0,b]$ norm of the function $\prescript{RL}{0}D_x^{-\mu}[f(x);y]$ is uniformly bounded in terms of the $L^1[0,yb]$ norm of $f$. Note that here we are defining $\mu$ to be the order of integration, not the order of differentiation, so its sign is reversed from the earlier expressions.

We start from the definition \eqref{IRLdef:intlower1}. For any $x\in[0,b]$,
\begin{align*}
\Big|\prescript{RL}{0}D_x^{-\mu}[f(x);y]\Big|&\leq\frac{1}{|\Gamma(\mu)|}\int_0^{yx}|f(t)|(x-t)^{\mathrm{Re}(\mu)-1}\,\mathrm{d}t \\
&\leq\frac{1}{|\Gamma(\mu)|}\left(\sup_{[0,yx]}(x-t)^{\mathrm{Re}(\mu)-1}\right)\int_0^{yx}|f(t)|\,\mathrm{d}t.
\end{align*}
The value of this supremum depends on the sign of $\mathrm{Re}(\mu)-1$. Thus, there are two cases to be considered according to the value of $\mu$.

\textbf{Case 1: $\boldsymbol{0<\mathrm{Re}(\mu)\leq1}$.} Here the supremum occurs at $t=yx$, so we have
\begin{align*}
\Big|\prescript{RL}{0}D_x^{-\mu}[f(x);y]\Big|&\leq\frac{(x-yx)^{\mathrm{Re}(\mu)-1}}{|\Gamma(\mu)|}\int_0^{yx}|f(t)|\,\mathrm{d}t \\
&\leq\frac{(x-yx)^{\mathrm{Re}(\mu)-1}}{|\Gamma(\mu)|}\Big\|f(t)\Big\|_{L^1[0,yb]}.
\end{align*}
Integrating this inequality over all $x\in[0,b]$, we deduce that
\begin{align}
\nonumber \Big\|\prescript{RL}{0}D^{-\mu}[f;y]\Big\|_{L^1[0,b]}&\leq\int_0^b\frac{(x-yx)^{\mathrm{Re}(\mu)-1}}{|\Gamma(\mu)|}\Big\|f(t)\Big\|_{L^1[0,yb]}\,\mathrm{d}x \\
\label{IRL:L1lower:eqn1} &=\frac{(1-y)^{\mathrm{Re}(\mu)-1}b^{\mathrm{Re}(\mu)}}{|\Gamma(\mu)|\mathrm{Re}(\mu)}\Big\|f(t)\Big\|_{L^1[0,yb]}.
\end{align}
The fraction coefficient on the right-hand side depends only on $b$, $y$, and $\mu$, so we have a bound of the desired form in this case.

\textbf{Case 2: $\boldsymbol{\mathrm{Re}(\mu)>1}$.} Here the supremum over $t\in[0,yx]$ of the function $(x-t)^{\mathrm{Re}(\mu)-1}$ occurs at $t=0$, so we have
\begin{align*}
\Big|\prescript{RL}{0}D_x^{-\mu}[f(x);y]\Big|&\leq\frac{x^{\mathrm{Re}(\mu)-1}}{|\Gamma(\mu)|}\int_0^{yx}|f(t)|\,\mathrm{d}t \\
&\leq\frac{x^{\mathrm{Re}(\mu)-1}}{|\Gamma(\mu)|}\Big\|f(t)\Big\|_{L^1[0,yb]}.
\end{align*}
Integrating this inequality over all $x\in[0,b]$, we deduce that
\begin{align}
\nonumber \Big\|\prescript{RL}{0}D^{-\mu}[f;y]\Big\|_{L^1[0,b]}&\leq\int_0^b\frac{x^{\mathrm{Re}(\mu)-1}}{|\Gamma(\mu)|}\Big\|f(t)\Big\|_{L^1[0,yb]}\,\mathrm{d}x \\
\label{IRL:L1lower:eqn2} &=\frac{b^{\mathrm{Re}(\mu)}}{|\Gamma(\mu)|\mathrm{Re}(\mu)}\Big\|f(t)\Big\|_{L^1[0,yb]}.
\end{align}
Again, the fraction on the right-hand side depends only on $b$, $y$, and $\mu$, so we have a bound of the desired form.
\end{proof}

\begin{thm}
\label{IRL:L1upper}
If $b>0$ and $0<y<1$ and $\mu\in\mathbb{C}$ with $\mathrm{Re}(\mu)>1$, then the $\mu$th upper incomplete Riemann--Liouville fractional integral defines a bounded operator \[\prescript{RL}{0}D^{-\mu}\{\cdot;y\}:L^1[0,b]\rightarrow L^1[0,b].\]
\end{thm}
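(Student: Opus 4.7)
The plan is to follow the same strategy as in Theorem \ref{IRL:L1lower}: starting from the defining integral, bound the modulus of the operator pointwise using the supremum of the kernel, and then integrate in $x$ over $[0,b]$. Starting from \eqref{IRLdef:intupper1}, for any $x\in[0,b]$ I would write
\begin{align*}
\Big|\prescript{RL}{0}D_x^{-\mu}\{f(x);y\}\Big|&\leq\frac{1}{|\Gamma(\mu)|}\int_{yx}^x|f(t)|(x-t)^{\mathrm{Re}(\mu)-1}\,\mathrm{d}t \\
&\leq\frac{1}{|\Gamma(\mu)|}\left(\sup_{t\in[yx,x]}(x-t)^{\mathrm{Re}(\mu)-1}\right)\int_{yx}^x|f(t)|\,\mathrm{d}t,
\end{align*}
and then estimate the remaining integral trivially by $\|f\|_{L^1[0,b]}$.

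The key observation is the location of the supremum over the new interval $[yx,x]$. Because $x-t$ decreases as $t$ increases, the function $(x-t)^{\mathrm{Re}(\mu)-1}$ is strictly decreasing in $t$ whenever $\mathrm{Re}(\mu)>1$, and so attains its supremum at the left endpoint $t=yx$, with value $(x(1-y))^{\mathrm{Re}(\mu)-1}=(1-y)^{\mathrm{Re}(\mu)-1}x^{\mathrm{Re}(\mu)-1}$. This is precisely why the hypothesis $\mathrm{Re}(\mu)>1$ is imposed: for $0<\mathrm{Re}(\mu)\leq 1$ the supremum would occur at the endpoint $t=x$ and be infinite, a problem that never arose in Theorem \ref{IRL:L1lower} because there the range $[0,yx]$ of $t$ stayed uniformly away from the singularity $t=x$ of the kernel, so both subcases $0<\mathrm{Re}(\mu)\leq 1$ and $\mathrm{Re}(\mu)>1$ could be covered.

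With this supremum in hand I would obtain the pointwise bound
\[
\Big|\prescript{RL}{0}D_x^{-\mu}\{f(x);y\}\Big|\leq\frac{(1-y)^{\mathrm{Re}(\mu)-1}x^{\mathrm{Re}(\mu)-1}}{|\Gamma(\mu)|}\Big\|f\Big\|_{L^1[0,b]},
\]
and integrating over $x\in[0,b]$ gives
\[
\Big\|\prescript{RL}{0}D^{-\mu}\{f;y\}\Big\|_{L^1[0,b]}\leq\frac{(1-y)^{\mathrm{Re}(\mu)-1}b^{\mathrm{Re}(\mu)}}{|\Gamma(\mu)|\,\mathrm{Re}(\mu)}\Big\|f\Big\|_{L^1[0,b]},
\]
with the coefficient depending only on $b$, $y$, and $\mu$, which establishes boundedness. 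There is no substantial technical obstacle; the only real subtlety is recognising that the kernel is maximised at the opposite endpoint of the integration range compared to the lower case, and that this unavoidably forces the strictly stronger restriction $\mathrm{Re}(\mu)>1$.
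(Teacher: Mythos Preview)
Your proof is correct and follows essentially the same approach as the paper: bound the kernel by its supremum on $[yx,x]$, observe that under $\mathrm{Re}(\mu)>1$ this supremum is attained at $t=yx$ with value $(1-y)^{\mathrm{Re}(\mu)-1}x^{\mathrm{Re}(\mu)-1}$, and integrate in $x$ to obtain the identical bound $\frac{(1-y)^{\mathrm{Re}(\mu)-1}b^{\mathrm{Re}(\mu)}}{|\Gamma(\mu)|\mathrm{Re}(\mu)}\|f\|_{L^1[0,b]}$. Your explanation of why the restriction $\mathrm{Re}(\mu)>1$ is forced here but not in the lower case is exactly the paper's reasoning.
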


\begin{proof}
Let $f$ be a function defined on $[0,b]$. We need to prove that the $L^1[0,b]$ norm of the function $\prescript{RL}{0}D_x^{-\mu}\{f(x);y\}$ is uniformly bounded in terms of the $L^1[0,b]$ norm of $f$. Again $\mu$ is the order of integration, not the order of differentiation, so its sign is reversed from the earlier expressions \eqref{IRLdef:intupper1}--\eqref{IRLdef:intupper3}.

We start from the definition \eqref{IRLdef:intupper1}. For any $x\in[0,b]$,
\begin{align*}
\Big|\prescript{RL}{0}D_x^{-\mu}\{f(x);y\}\Big|&\leq\frac{1}{|\Gamma(\mu)|}\int_{yx}^x|f(t)|(x-t)^{\mathrm{Re}(\mu)-1}\,\mathrm{d}t \\
&\leq\frac{1}{|\Gamma(\mu)|}\left(\sup_{[yx,x]}(x-t)^{\mathrm{Re}(\mu)-1}\right)\int_{yx}^x|f(t)|\,\mathrm{d}t.
\end{align*}
Since we assumed $\mathrm{Re}(\mu)>1$, the supremum occurs at $t=yx$. (In this case, if we had $0<\mathrm{Re}(\mu)<1$, the supremum would be infinite due to the blowup at $t=x$.) So we have
\begin{align*}
\Big|\prescript{RL}{0}D_x^{-\mu}\{f(x);y\}\Big|&\leq\frac{(x-yx)^{\mathrm{Re}(\mu)-1}}{|\Gamma(\mu)|}\int_{yx}^x|f(t)|\,\mathrm{d}t \\
&\leq\frac{(x-yx)^{\mathrm{Re}(\mu)-1}}{|\Gamma(\mu)|}\Big\|f(t)\Big\|_{L^1[0,b]}.
\end{align*}
Integrating this inequality over all $x\in[0,b]$, we deduce that
\begin{align}
\nonumber \Big\|\prescript{RL}{0}D^{-\mu}\{f;y\}\Big\|_{L^1[0,b]}&\leq\int_0^b\frac{(x-yx)^{\mathrm{Re}(\mu)-1}}{|\Gamma(\mu)|}\Big\|f(t)\Big\|_{L^1[0,b]}\,\mathrm{d}x \\
\label{IRL:L1upper:eqn} &=\frac{(1-y)^{\mathrm{Re}(\mu)-1}b^{\mathrm{Re}(\mu)}}{|\Gamma(\mu)|\mathrm{Re}(\mu)}\Big\|f(t)\Big\|_{L^1[0,yb]}.
\end{align}
The fraction on the right-hand side depends only on $b$, $y$, and $\mu$, so we have a bound of the desired form.
\end{proof}

Given Theorems \ref{IRL:L1lower} and \ref{IRL:L1upper}, it is possible to specify a function space as the domain for the lower and upper incomplete Riemann--Liouville fractional integrals. We state the definitions formally as follows.

\begin{defn}
\label{IRL:lowerdefn}
Let $b>0$, $0<y<1$, and $\mu\in\mathbb{C}$ with $\mathrm{Re}(\mu)>0$. For any function $f:[0,b]\rightarrow\mathbb{C}$ which is $L^1$ on the subinterval $[0,yb]$, the $\mu$th \textbf{lower incomplete Riemann--Liouville fractional integral} of $f$ is defined by the equations
\begin{align*}
\prescript{RL}{0}I_x^{\mu}[f(x);y]&=\frac{1}{\Gamma(\mu)}\int_0^{yx}(x-t)^{\mu-1}f(t)\,\mathrm{d}t \\
&=\frac{x^{\mu}}{\Gamma(\mu)}\int_0^y(1-u)^{\mu-1}f(ux)\,\mathrm{d}u \\
&=\frac{x^{\mu}y}{\Gamma(\mu)}\int_0^1(1-wy)^{\mu-1}f(ywx)\,\mathrm{d}w,
\end{align*}
namely by precisely the existing equations \eqref{IRLdef:intlower1}--\eqref{IRLdef:intlower3}, with the sign of $\mu$ inverted so that we are considering the $\mu$th fractional integral instead of the $\mu$th fractional derivative.
\end{defn}

\begin{defn}
\label{IRL:upperdefn1}
Let $b>0$, $0<y<1$, and $\mu\in\mathbb{C}$ with $\mathrm{Re}(\mu)>1$. For any function $f\in L^1[0,b]$, the $\mu$th \textbf{upper incomplete Riemann--Liouville fractional integral} of $f$ is defined by the equations
\begin{align*}
\prescript{RL}{0}I_x^{\mu}\{f(x);y\}&=\frac{1}{\Gamma(\mu)}\int_{yx}^x(x-t)^{\mu-1}f(t)\,\mathrm{d}t \\
&=\frac{x^{\mu}}{\Gamma(\mu)}\int_y^1(1-u)^{\mu-1}f(ux)\,\mathrm{d}u \\
&=\frac{x^{\mu}y}{\Gamma(\mu)}\int_0^{1-y}v^{\mu-1}f((1-v)x)\,\mathrm{d}v,
\end{align*}
namely by precisely the existing equations \eqref{IRLdef:intlower1}--\eqref{IRLdef:intlower3}, with the sign of $\mu$ inverted so that we are considering the $\mu$th fractional integral instead of the $\mu$th fractional derivative.
\end{defn}

In order to define the upper incomplete RL fractional integral for $0<\mathrm{Re}(\mu)<1$, we need a different way of bounding the integral \eqref{IRLdef:intupper1}. This is provided by Theorem \ref{IRL:Linfty} below, after which we state another formal definition to accompany Definition \ref{IRL:upperdefn1}.

\begin{thm}
\label{IRL:Linfty}
If $b>0$ and $0<y<1$ and $\mu\in\mathbb{C}$ with $\mathrm{Re}(\mu)>0$, then the $\mu$th lower incomplete Riemann--Liouville fractional integral defines a bounded operator \[\prescript{RL}{0}D^{-\mu}[\cdot;y]:L^{\infty}[0,yb]\rightarrow L^{\infty}[0,b],\] and the $\mu$th upper incomplete Riemann--Liouville fractional integral defines a bounded operator \[\prescript{RL}{0}D^{-\mu}\{\cdot;y\}:L^{\infty}[0,b]\rightarrow L^{\infty}[0,b].\]
\end{thm}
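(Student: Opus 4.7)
The plan is to mimic the structure of the proofs of Theorems \ref{IRL:L1lower} and \ref{IRL:L1upper}, but replace the $L^1$ bound on $f$ with an $L^\infty$ bound, which lets us pull $\|f\|_{L^\infty}$ out of the integral and then simply compute the remaining elementary integral of a power function in $t$. Because we no longer need to take the supremum of the kernel $(x-t)^{\text{Re}(\mu)-1}$ separately from integrating it, there is no splitting into cases according to whether $\text{Re}(\mu)$ is above or below $1$, and in particular the upper incomplete version works uniformly for all $\text{Re}(\mu)>0$ (this is the key improvement over Theorem \ref{IRL:L1upper}).

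For the lower integral, I would start from \eqref{IRLdef:intlower1}. For $x\in[0,b]$ and $f\in L^\infty[0,yb]$, bound
\[
\Big|\prescript{RL}{0}D_x^{-\mu}[f(x);y]\Big|\leq\frac{\|f\|_{L^\infty[0,yb]}}{|\Gamma(\mu)|}\int_0^{yx}(x-t)^{\text{Re}(\mu)-1}\,\mathrm{d}t=\frac{\|f\|_{L^\infty[0,yb]}\,x^{\text{Re}(\mu)}\bigl[1-(1-y)^{\text{Re}(\mu)}\bigr]}{|\Gamma(\mu)|\,\text{Re}(\mu)}.
\]
Taking the essential supremum over $x\in[0,b]$ (the right-hand side is monotone in $x$ since $\text{Re}(\mu)>0$) then yields a bound of the desired form, with a constant depending only on $b$, $y$, and $\mu$.

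For the upper integral, I would similarly start from \eqref{IRLdef:intupper1} and, for $x\in[0,b]$ and $f\in L^\infty[0,b]$, bound
\[
\Big|\prescript{RL}{0}D_x^{-\mu}\{f(x);y\}\Big|\leq\frac{\|f\|_{L^\infty[0,b]}}{|\Gamma(\mu)|}\int_{yx}^x(x-t)^{\text{Re}(\mu)-1}\,\mathrm{d}t=\frac{\|f\|_{L^\infty[0,b]}\,(1-y)^{\text{Re}(\mu)}x^{\text{Re}(\mu)}}{|\Gamma(\mu)|\,\text{Re}(\mu)}.
\]
The key remark here is that although the pointwise supremum of $(x-t)^{\text{Re}(\mu)-1}$ over $t\in[yx,x]$ was infinite when $0<\text{Re}(\mu)<1$ (which is what forced the restriction $\text{Re}(\mu)>1$ in Theorem \ref{IRL:L1upper}), the integral of this kernel is still perfectly finite because the singularity at $t=x$ is integrable as long as $\text{Re}(\mu)>0$. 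Again taking the essential supremum over $x\in[0,b]$ gives a bound of the desired form.

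The proof is essentially routine; the only thing to be careful about is making clear why the $L^\infty$ estimate succeeds in the range $0<\text{Re}(\mu)\le1$ where the $L^1$ estimate failed, so I would include a short remark highlighting that this is the reason to introduce the $L^\infty$ setting, paving the way for the extended definition of the upper incomplete integral in the following Definition (to accompany Definition \ref{IRL:upperdefn1}).
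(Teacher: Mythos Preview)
Your proposal is correct and follows essentially the same approach as the paper's own proof: both bound $|f|$ by its $L^\infty$ norm, integrate the remaining power kernel explicitly to obtain the same constants $\frac{b^{\mathrm{Re}(\mu)}[1-(1-y)^{\mathrm{Re}(\mu)}]}{\mathrm{Re}(\mu)|\Gamma(\mu)|}$ and $\frac{b^{\mathrm{Re}(\mu)}(1-y)^{\mathrm{Re}(\mu)}}{\mathrm{Re}(\mu)|\Gamma(\mu)|}$, and then take the supremum over $x$. Your remark about why the upper case now works for $0<\mathrm{Re}(\mu)\le1$ (the singularity is integrable even though its supremum blows up) is also exactly the observation the paper makes.
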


\begin{proof}
Let $f$ be a function defined on $[0,b]$. We need to prove that the $L^{\infty}[0,b]$ norm of the function $\prescript{RL}{0}D_x^{-\mu}[f(x);y]$ is uniformly bounded in terms of the $L^{\infty}[0,yb]$ norm of $f$, and that the $L^{\infty}[0,b]$ norm of the function $\prescript{RL}{0}D_x^{-\mu}\{f(x);y\}$ is uniformly bounded in terms of the $L^{\infty}[0,b]$ norm of $f$.

\textbf{Case 1: lower incomplete.} We start from the definition \eqref{IRLdef:intlower1}. For any $x\in[0,b]$,
\begin{align*}
\Big|\prescript{RL}{0}D_x^{-\mu}[f(x);y]\Big|&\leq\frac{1}{|\Gamma(\mu)|}\int_0^{yx}|f(t)|(x-t)^{\mathrm{Re}(\mu)-1}\,\mathrm{d}t \\
&\leq\frac{1}{|\Gamma(\mu)|}\left(\esssup_{[0,yx]}|f|\right)\int_0^{yx}(x-t)^{\mathrm{Re}(\mu)-1}\,\mathrm{d}t \\
&=\frac{1}{\mathrm{Re}(\mu)|\Gamma(\mu)|}\esssup_{[0,yx]}|f|\left[(x-t)^{\mathrm{Re}(\mu)}\right]_{t=0}^{t=yx} \\
&=\frac{x^{\mathrm{Re}(\mu)}}{\mathrm{Re}(\mu)|\Gamma(\mu)|}\left[1-(1-y)^{\mathrm{Re}(\mu)}\right]\esssup_{[0,yx]}|f| \\
&\leq\frac{b^{\mathrm{Re}(\mu)}}{\mathrm{Re}(\mu)|\Gamma(\mu)|}\left[1-(1-y)^{\mathrm{Re}(\mu)}\right]\Big\|f\Big\|_{L^{\infty}[0,yb]}.
\end{align*}
Taking the supremum over all $x$, we deduce that
\begin{equation}
\label{IRL:Linftylower:eqn}
\Big\|\prescript{RL}{0}D^{-\mu}[f;y]\Big\|_{L^{\infty}[0,b]}\leq\frac{b^{\mathrm{Re}(\mu)}}{\mathrm{Re}(\mu)|\Gamma(\mu)|}\left[1-(1-y)^{\mathrm{Re}(\mu)}\right]\Big\|f\Big\|_{L^{\infty}[0,yb]}.
\end{equation}
The coefficient accompanying the norm on the right-hand side depends only on $b$, $y$, and $\mu$, so we have the desired result for lower incomplete RL integrals.

\textbf{Case 2: upper incomplete.} We start from the definition \eqref{IRLdef:intupper1}. For any $x\in[0,b]$,
\begin{align*}
\Big|\prescript{RL}{0}D_x^{-\mu}\{f(x);y\}\Big|&\leq\frac{1}{|\Gamma(\mu)|}\int_{yx}^x|f(t)|(x-t)^{\mathrm{Re}(\mu)-1}\,\mathrm{d}t \\
&\leq\frac{1}{|\Gamma(\mu)|}\left(\esssup_{[0,yx]}|f|\right)\int_{yx}^x(x-t)^{\mathrm{Re}(\mu)-1}\,\mathrm{d}t \\
&=\frac{1}{\mathrm{Re}(\mu)|\Gamma(\mu)|}\esssup_{[yx,x]}|f|\left[(x-t)^{\mathrm{Re}(\mu)}\right]_{t=yx}^{t=x} \\
&=\frac{x^{\mathrm{Re}(\mu)}}{\mathrm{Re}(\mu)|\Gamma(\mu)|}\left[(1-y)^{\mathrm{Re}(\mu)}-0\right]\esssup_{[yx,x]}|f| \\
&\leq\frac{b^{\mathrm{Re}(\mu)}(1-y)^{\mathrm{Re}(\mu)}}{\mathrm{Re}(\mu)|\Gamma(\mu)|}\Big\|f\Big\|_{L^{\infty}[0,b]}.
\end{align*}
(Note that here we have used the assumption that $\mathrm{Re}(\mu)>0$.) Taking the supremum over all $x$, we deduce that
\begin{equation}
\label{IRL:Linftyupper:eqn}
\Big\|\prescript{RL}{0}D^{-\mu}\{f;y\}\Big\|_{L^{\infty}[0,b]}\leq\frac{b^{\mathrm{Re}(\mu)}(1-y)^{\mathrm{Re}(\mu)}}{\mathrm{Re}(\mu)|\Gamma(\mu)|}\Big\|f\Big\|_{L^{\infty}[0,b]}.
\end{equation}
Again, the fraction on the right-hand side depends only on $b$, $y$, and $\mu$, so we have the desired result for upper incomplete RL integrals.
\end{proof}

Given the second part of Theorem \ref{IRL:Linfty}, it is possible to specify a function space as the domain for the upper incomplete Riemann--Liouville fractional integral even in the case $0<\mathrm{Re}(\mu)\leq1$. We state the definition formally as follows, to complement Definition \ref{IRL:upperdefn1}.

\begin{defn}
\label{IRL:upperdefn2}
Let $b>0$, $0<y<1$, and $\mu\in\mathbb{C}$ with $0<\mathrm{Re}(\mu)\leq1$. For any function $f\in L^{\infty}[0,b]$, the $\mu$th \textbf{upper incomplete Riemann--Liouville fractional integral} of $f$ is defined by the same equations as in Definition \ref{IRL:upperdefn1}, namely once again by \eqref{IRLdef:intupper1}--\eqref{IRLdef:intupper3} with the sign of $\mu$ inverted.
\end{defn}

Note that the restriction $\mathrm{Re}(\mu)\leq1$ is not required for Definition \ref{IRL:upperdefn2} to make sense. We include it only because the definition in the case $\mathrm{Re}(\mu)>1$ is already established, on a larger function space than $L^{\infty}[0,b]$, by the previous Definition \ref{IRL:upperdefn1}.

\begin{remark}
The nature of the domain of the lower incomplete RL fractional integral, as specified in the above theorems and definitions, is interesting because these operators allow us to extend the domain of good behaviour for $f$.

For example, if we start with a function $f:[0,b]\rightarrow\mathbb{C}$ which is $L^1$ only on the subinterval $[0,yb]$, then after applying the lower incomplete RL fractional integral, we obtain a new function which is $L^1$ on the whole of $[0,b]$. Similarly with $L^{\infty}$ or indeed, by H\"older's inequality, any other $L^p$ space.

Such extension of domains could be very important in the theory of partial differential equations, in which a well-behaved forcing function is used to prove regularity results for an unknown solution function \cite{fernandez,hormander}.
\end{remark}

In the case where $\mu$ is real, the inequalities bounding the operator norms for the incomplete RL integrals can be written in a more elegant form. We include this result as a corollary.

\begin{coroll}
Let $b>0$, $0<y<1$, $\mu\in\mathbb{R}^+$, and let $f$ be a function defined on $[0,b]$.
\begin{enumerate}
\item If $f\in L^1[0,yb]$ and $0<\mu\leq1$, then \[\Big\|\prescript{RL}{0}D^{-\mu}[f;y]\Big\|_{L^1[0,b]}\leq\frac{(1-y)^{\mu-1}b^{\mu}}{\Gamma(\mu+1)}\Big\|f(t)\Big\|_{L^1[0,yb]}.\]
\item If $f\in L^1[0,yb]$ and $\mu>1$, then \[\Big\|\prescript{RL}{0}D^{-\mu}[f;y]\Big\|_{L^1[0,b]}\leq\frac{b^{\mu}}{\Gamma(\mu+1)}\Big\|f(t)\Big\|_{L^1[0,yb]}.\]
\item If $f\in L^1[0,b]$ and $\mu>1$, then \[\Big\|\prescript{RL}{0}D^{-\mu}\{f;y\}\Big\|_{L^1[0,b]}\leq\frac{(1-y)^{\mu-1}b^{\mu}}{\Gamma(\mu+1)}\Big\|f(t)\Big\|_{L^1[0,yb]}.\]
\item If $f\in L^{\infty}[0,yb]$, then \[\Big\|\prescript{RL}{0}D^{-\mu}[f;y]\Big\|_{L^{\infty}[0,b]}\leq\frac{\left[1-(1-y)^{\mu}\right]b^{\mu}}{\Gamma(\mu+1)}\Big\|f\Big\|_{L^{\infty}[0,yb]}.\]
\item If $f\in L^{\infty}[0,b]$, then \[\Big\|\prescript{RL}{0}D^{-\mu}\{f;y\}\Big\|_{L^{\infty}[0,b]}\leq\frac{(1-y)^{\mu}b^{\mu}}{\Gamma(\mu+1)}\Big\|f\Big\|_{L^{\infty}[0,b]}.\]
\end{enumerate}
\end{coroll}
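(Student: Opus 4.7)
The plan is to observe that each of the five bounds in the corollary is merely the restriction to real $\mu>0$ of an inequality already established in one of the preceding theorems. Nothing genuinely new needs to be shown: the task is a routine simplification using the facts that $\mathrm{Re}(\mu)=\mu$, that $|\Gamma(\mu)|=\Gamma(\mu)$ (since $\Gamma$ is real and positive on $\mathbb{R}^+$), and that the coefficient $\mathrm{Re}(\mu)|\Gamma(\mu)|$ in the denominators coalesces into $\Gamma(\mu+1)$ via the functional equation $\mu\Gamma(\mu)=\Gamma(\mu+1)$.

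First I would dispatch items (1) and (2) together, as both come from Theorem \ref{IRL:L1lower}: item (1) is the real-$\mu$ specialization of the bound \eqref{IRL:L1lower:eqn1} derived there in Case 1 ($0<\mathrm{Re}(\mu)\le 1$), while item (2) is the analogous specialization of \eqref{IRL:L1lower:eqn2} derived in Case 2 ($\mathrm{Re}(\mu)>1$). In each, replacing $\mathrm{Re}(\mu)$ by $\mu$ and rewriting the denominator as $\Gamma(\mu+1)$ yields the stated inequality. Item (3) is handled identically: it is the real-$\mu$ restriction of the bound \eqref{IRL:L1upper:eqn} established in Theorem \ref{IRL:L1upper}, whose hypothesis $\mathrm{Re}(\mu)>1$ is exactly the one kept here. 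Items (4) and (5) are obtained the same way from \eqref{IRL:Linftylower:eqn} and \eqref{IRL:Linftyupper:eqn} in the two halves of Theorem \ref{IRL:Linfty}.

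There is effectively no obstacle to overcome: the corollary exists purely because the resulting expressions look tidier in the real case, not because the argument requires new ingredients. The only thing worth checking is the trivial point that the branch of $(1-y)^{\mu-1}$ or $(1-y)^{\mu}$ arising in the theorems --- a positive real base $1-y\in(0,1)$ raised to a complex exponent --- coincides with the standard real power when $\mu\in\mathbb{R}^+$, so the formulas on the right-hand sides transfer to their stated real-valued forms without any further comment.
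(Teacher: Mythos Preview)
Your proposal is correct and matches the paper's own proof essentially verbatim: the paper simply states that the five inequalities are \eqref{IRL:L1lower:eqn1}, \eqref{IRL:L1lower:eqn2}, \eqref{IRL:L1upper:eqn}, \eqref{IRL:Linftylower:eqn}, and \eqref{IRL:Linftyupper:eqn} specialised to $\mu\in\mathbb{R}$. Your added remark that $\mathrm{Re}(\mu)|\Gamma(\mu)|=\mu\Gamma(\mu)=\Gamma(\mu+1)$ for real positive $\mu$ is exactly the simplification the paper is tacitly invoking.
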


\begin{proof}
These results are just the inequalities \eqref{IRL:L1lower:eqn1}, \eqref{IRL:L1lower:eqn2}, \eqref{IRL:L1upper:eqn}, \eqref{IRL:Linftylower:eqn}, and \eqref{IRL:Linftyupper:eqn} in the case $\mu\in\mathbb{R}$.
\end{proof}

\begin{remark}
Letting $y\rightarrow0$ in the above inequalities for $L^1$ and $L^{\infty}$ norms of the lower incomplete RL integral yields some interesting results.

The inequality \eqref{IRL:L1lower:eqn1} is \[\Big\|\prescript{RL}{0}D^{-\mu}[f;y]\Big\|_{L^1[0,b]}\leq\frac{(1-y)^{\mathrm{Re}(\mu)-1}b^{\mathrm{Re}(\mu)}}{|\Gamma(\mu)|\mathrm{Re}(\mu)}\Big\|f(t)\Big\|_{L^1[0,yb]}.\] As $y\rightarrow0$, the right-hand side of this inequality tends to \[\frac{b^{\mathrm{Re}(\mu)}}{|\Gamma(\mu)|\mathrm{Re}(\mu)}\lim_{y\rightarrow0}\Big\|f(t)\Big\|_{L^1[0,yb]},\] which equals \[\frac{b^{\mathrm{Re}(\mu)}f(0)}{|\Gamma(\mu)|\mathrm{Re}(\mu)}\] if $0$ is a Lebesgue point of $f$.

The inequality \eqref{IRL:L1lower:eqn2} is \[\Big\|\prescript{RL}{0}D^{-\mu}[f;y]\Big\|_{L^1[0,b]}\leq\frac{b^{\mathrm{Re}(\mu)}}{|\Gamma(\mu)|\mathrm{Re}(\mu)}\Big\|f(t)\Big\|_{L^1[0,yb]}.\] As $y\rightarrow0$, the right-hand side of this inequality again tends to \[\frac{b^{\mathrm{Re}(\mu)}f(0)}{|\Gamma(\mu)|\mathrm{Re}(\mu)}\] if $0$ is a Lebesgue point of $f$.

The inequality \eqref{IRL:Linftylower:eqn} is \[\Big\|\prescript{RL}{0}D^{-\mu}[f;y]\Big\|_{L^{\infty}[0,b]}\leq\frac{\left[1-(1-y)^{\mathrm{Re}(\mu)}\right]b^{\mathrm{Re}(\mu)}}{\mathrm{Re}(\mu)|\Gamma(\mu)|}\Big\|f\Big\|_{L^{\infty}[0,yb]}.\] As $y\rightarrow0$, the right-hand side of this inequality tends asymptotically to \[\frac{\left[y\mathrm{Re}(\mu)\right]b^{\mathrm{Re}(\mu)}}{\mathrm{Re}(\mu)|\Gamma(\mu)|}\lim_{y\rightarrow0}\Big\|f\Big\|_{L^{\infty}[0,yb]},\] which yields the following leading-order linear term: \[\frac{yb^{\mathrm{Re}(\mu)}f(0)}{|\Gamma(\mu)|},\] if $0$ is a point of continuity of $f$.
\end{remark}

\subsection{Definitions for the fractional derivatives}

Fractional \textit{integrals} of incomplete Riemann--Liouville type were proposed in \cite{ozarslan-ustaoglu1} and their conditions carefully specified in the work above. What about fractional \textit{derivatives}? The Definitions \ref{IRL:lowerdefn}, \ref{IRL:upperdefn1}, and \ref{IRL:upperdefn2} are specified to define $\prescript{RL}{0}D_x^{\mu}[f(x);y]$ and $\prescript{RL}{0}D_x^{\mu}\{f(x);y\}$ only in the case $\mathrm{Re}(\mu)<0$, but for a fully developed model of fractional calculus it should also be possible to define these operators in the case $\mathrm{Re}(\mu)\geq0$.

In the classical Riemann--Liouville model, the fractional derivatives are defined by taking standard integer-order derivatives of appropriate fractional integrals. Thus, we might be tempted to do the same thing here, e.g. defining $\prescript{RL}{0}D_x^{1/2}[f(x);y]=\frac{\mathrm{d}}{\mathrm{d}x}\left(\prescript{RL}{0}D_x^{-1/2}[f(x);y]\right)$ and $\prescript{RL}{0}D_x^{1/2}\{f(x);y\}=\frac{\mathrm{d}}{\mathrm{d}x}\left(\prescript{RL}{0}D_x^{-1/2}\{f(x);y\}\right)$. This also seems like a natural complement to the existing definition for incomplete Caputo fractional derivatives \cite{ozarslan-ustaoglu2}. However, it is not clear whether or not this would be a `natural' extension of the Definitions \ref{IRL:lowerdefn}, \ref{IRL:upperdefn1}, and \ref{IRL:upperdefn2}.

The obvious question to ask, then, is: what makes the Riemann--Liouville derivatives a `natural' extension of the definition of Riemann--Liouville integrals? What is the justification for this definition over, say, that of Caputo derivatives?

One answer to this question is that the Riemann--Liouville fractional derivative $\prescript{RL}{c}D^{\mu}_xf(x),\mathrm{Re}(\mu)\geq0,$ forms the \textbf{analytic continuation in $\boldsymbol{\mu}$} of the Riemann--Liouville fractional integral $\prescript{RL}{c}D^{\mu}_xf(x),\mathrm{Re}(\mu)<0$. This way of thinking is unique to fractional calculus: with $\mu$ as a continuous variable, it is possible to perform calculus with respect to $\mu$ as well as with respect to $x$.

More specifically, if we define a function $F_x$ by \[F_x(\mu)=\prescript{RL}{c}D^{\mu}_xf(x),\quad\quad\mathrm{Re}(\mu)<0,\] then this function is analytic and satisfies the following functional equation:
\begin{equation}
\label{RL:fnleqn}
\frac{\mathrm{d}}{\mathrm{d}x}F_x(\mu)=F_x(\mu+1),\quad\quad\mathrm{Re}(\mu)<-1.
\end{equation}
This can then be used to extend $F_x$ to a meromorphic function on the entire complex plane. The functional equation \eqref{RL:fnleqn} gives us a way of defining $F_x(\mu)$ for $0\leq\mathrm{Re}(\mu)<1$, then for $1\leq\mathrm{Re}(\mu)<2$, then for $2\leq\mathrm{Re}(\mu)<3$, etc., in such a way that it is analytic on each of these regions. This analytic continuation is precisely the Riemann--Liouville fractional derivative.

Can we similarly use analytic continuation to define upper and lower incomplete Riemann--Liouville fractional derivatives? In order to find an analogue of the functional equation \eqref{RL:fnleqn}, we must consider the effect of the differentiation operator on the upper and lower incomplete Riemann--Liouville fractional integrals . To this end, the following two theorems are established.

\begin{thm}
\label{composn:lower}
The composition of the lower incomplete Riemann--Liouville fractional integral with the standard operation of differentiation is given by the following identities:
\begin{align}
\label{composn:d:lower} \frac{\mathrm{d}}{\mathrm{d}x}\Big(\prescript{RL}{0}D_x^{-\mu}[f(x);y]\Big)&=\frac{y(1-y)^{\mu-1}}{\Gamma(\mu)}x^{\mu-1}f(xy)+\prescript{RL}{0}D^{1-\mu}_x[f(x);y], \\
\label{composn:lower:d} \prescript{RL}{0}D_x^{-\mu}[f'(x);y]&=\frac{x^{\mu-1}}{\Gamma(\mu)}\left((1-y)^{\mu-1}f(xy)-f(0)\right)+\prescript{RL}{0}D^{1-\mu}_x[f(x);y],
\end{align}
valid for $\mathrm{Re}(\mu)>1$ and for $f,x,y$ satisfying the appropriate criteria from Definition \ref{IRL:lowerdefn}.
\end{thm}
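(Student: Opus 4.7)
The plan is to prove both identities by direct manipulation of the primary integral representation \eqref{IRLdef:intlower1}: identity \eqref{composn:d:lower} will follow from the Leibniz rule for differentiation under the integral sign, applied to an integral with both a variable upper limit and an $x$-dependent kernel, while identity \eqref{composn:lower:d} will follow from a single integration by parts. In each case the interior integral that remains is to be recognised, after adjusting a gamma factor, as precisely $\prescript{RL}{0}D_x^{1-\mu}[f(x);y]$ via Definition \ref{IRL:lowerdefn} applied with parameter $\mu - 1$.

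For \eqref{composn:d:lower}, starting from
$$\prescript{RL}{0}D_x^{-\mu}[f(x);y] = \frac{1}{\Gamma(\mu)}\int_0^{yx}(x-t)^{\mu-1}f(t)\,\mathrm{d}t,$$
Leibniz's rule splits $\frac{\mathrm{d}}{\mathrm{d}x}$ into a boundary contribution from the upper limit $yx$, equal to $\Gamma(\mu)^{-1}\cdot y\cdot(x-yx)^{\mu-1}f(yx) = \Gamma(\mu)^{-1}y(1-y)^{\mu-1}x^{\mu-1}f(yx)$, which is exactly the first summand on the right of \eqref{composn:d:lower}, plus an interior contribution $\Gamma(\mu)^{-1}(\mu-1)\int_0^{yx}(x-t)^{\mu-2}f(t)\,\mathrm{d}t$; using the identity $(\mu-1)\Gamma(\mu-1) = \Gamma(\mu)$ this rewrites as $\Gamma(\mu-1)^{-1}\int_0^{yx}(x-t)^{\mu-2}f(t)\,\mathrm{d}t$, which is $\prescript{RL}{0}D_x^{1-\mu}[f(x);y]$ by Definition \ref{IRL:lowerdefn}. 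For \eqref{composn:lower:d}, I would integrate by parts in
$$\prescript{RL}{0}D_x^{-\mu}[f'(x);y] = \frac{1}{\Gamma(\mu)}\int_0^{yx}(x-t)^{\mu-1}f'(t)\,\mathrm{d}t$$
with $u=(x-t)^{\mu-1}$ and $\mathrm{d}v=f'(t)\,\mathrm{d}t$; the boundary evaluation yields $\Gamma(\mu)^{-1}x^{\mu-1}\bigl[(1-y)^{\mu-1}f(yx) - f(0)\bigr]$, and the remaining integral is once again equal to $\prescript{RL}{0}D_x^{1-\mu}[f(x);y]$ by the same gamma-function identity.

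The condition $\mathrm{Re}(\mu) > 1$ serves two roles: it ensures $\mathrm{Re}(1-\mu) < 0$, so that the term $\prescript{RL}{0}D_x^{1-\mu}[f(x);y]$ on the right-hand sides is a bona fide incomplete fractional integral under Definition \ref{IRL:lowerdefn}; and it makes the kernel $(x-t)^{\mu-2}$ appearing in the interior integral integrable — in fact bounded, since $x-t \geq (1-y)x > 0$ on $[0,yx]$. I do not anticipate any serious obstacle. The only delicate point is verifying that the hypotheses on $f$ are strong enough for both manipulations: Leibniz's rule is justified once $f \in L^1[0,yb]$ combined with the non-singularity of the kernel on $[0,yx]$, while the integration by parts in \eqref{composn:lower:d} tacitly requires $f$ absolutely continuous on $[0,yb]$, so that $f'\in L^1$, the pointwise value $f(0)$ exists, and the fundamental theorem of calculus applies — this is what ``appropriate criteria from Definition \ref{IRL:lowerdefn}'' for both sides of \eqref{composn:lower:d} should be taken to mean.
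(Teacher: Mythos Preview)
Your proposal is correct and matches the paper's proof essentially line for line: Leibniz's rule for \eqref{composn:d:lower}, integration by parts for \eqref{composn:lower:d}, and the identity $\Gamma(\mu)=(\mu-1)\Gamma(\mu-1)$ to recognise the remaining integral as $\prescript{RL}{0}D_x^{1-\mu}[f(x);y]$. Your additional remarks on the role of $\mathrm{Re}(\mu)>1$ and the implicit absolute-continuity requirement for \eqref{composn:lower:d} go slightly beyond what the paper makes explicit, but are apt.
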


\begin{proof}
To prove \eqref{composn:d:lower}, we start from the definition \eqref{IRLdef:intlower1} and use the standard method for differentiating with respect to $x$ an integral expression whose $x$-dependence is both in the integrand and in the upper bound of integration:
\begin{align*}
\frac{\mathrm{d}}{\mathrm{d}x}\Big(\prescript{RL}{0}D_x^{-\mu}[f(x);y]\Big)&=\frac{\mathrm{d}}{\mathrm{d}x}\left(\frac{1}{\Gamma(\mu)}\int_0^{yx}(x-t)^{\mu-1}f(t)\,\mathrm{d}t\right) \\
&=\frac{1}{\Gamma(\mu)}\left(y(x-yx)^{\mu-1}f(yx)+\int_0^{yx}(\mu-1)(x-t)^{\mu-2}f(t)\,\mathrm{d}t\right) \\
&=\frac{y(1-y)^{\mu-1}x^{\mu-1}f(yx)}{\Gamma(\mu)}+\frac{\mu-1}{\Gamma(\mu)}\int_0^{yx}(x-t)^{\mu-2}f(t)\,\mathrm{d}t \\
&=\frac{y(1-y)^{\mu-1}}{\Gamma(\mu)}x^{\mu-1}f(yx)+\prescript{RL}{0}D^{1-\mu}_x[f(x);y],
\end{align*}
as required, where for the final step we used the fact that $\Gamma(\mu)=(\mu-1)\Gamma(\mu-1)$.

To prove \eqref{composn:lower:d}, we again start from the definition \eqref{IRLdef:intlower1} and use integration by parts:
\begin{align*}
\prescript{RL}{0}D_x^{-\mu}[f'(x);y]&=\frac{1}{\Gamma(\mu)}\int_0^{yx}(x-t)^{\mu-1}f'(t)\,\mathrm{d}t \\
&=\frac{1}{\Gamma(\mu)}\left(\Big[(x-t)^{\mu-1}f(t)\Big]_{t=0}^{t=yx}+\int_0^{yx}(\mu-1)(x-t)^{\mu-2}f(t)\,\mathrm{d}t\right) \\
&=\frac{1}{\Gamma(\mu)}\Big((x-yx)^{\mu-1}f(xy)-x^{\mu-1}f(0)\Big)+\frac{\mu-1}{\Gamma(\mu)}\int_0^{yx}(x-t)^{\mu-2}f(t)\,\mathrm{d}t \\
&=\frac{x^{\mu-1}}{\Gamma(\mu)}\left((1-y)^{\mu-1}f(xy)-f(0)\right)+\prescript{RL}{0}D^{1-\mu}_x[f(x);y],
\end{align*}
as required, where again we used $\Gamma(\mu)=(\mu-1)\Gamma(\mu-1)$ in the final step.
\end{proof}

\begin{thm}
\label{composn:upper}
The composition of the upper incomplete Riemann--Liouville fractional integral with the standard operation of differentiation is given by the following identities:
\begin{align}
\label{composn:d:upper} \frac{\mathrm{d}}{\mathrm{d}x}\Big(\prescript{RL}{0}D_x^{-\mu}\{f(x);y\}\Big)&=-\frac{y(1-y)^{\mu-1}}{\Gamma(\mu)}x^{\mu-1}f(xy)+\prescript{RL}{0}D^{1-\mu}_x\{f(x);y\}, \\
\label{composn:upper:d} \prescript{RL}{0}D_x^{-\mu}\{f'(x);y\}&=-\frac{x^{\mu-1}}{\Gamma(\mu)}(1-y)^{\mu-1}f(xy)+\prescript{RL}{0}D^{1-\mu}_x\{f(x);y\},
\end{align}
valid for $\mathrm{Re}(\mu)>1$ and for $f,x,y$ satisfying the appropriate criteria from Definitions \ref{IRL:upperdefn1} and \ref{IRL:upperdefn2}.
\end{thm}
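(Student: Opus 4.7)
The plan is to mirror the proof of Theorem \ref{composn:lower} closely, with the integration interval $[0,yx]$ replaced throughout by $[yx,x]$. The key structural difference is that now the $x$-dependence of the integral \eqref{IRLdef:intupper1} enters through the integrand, through the upper limit $x$, \emph{and} through the lower limit $yx$, so the differentiation step requires the full Leibniz rule with two moving endpoints.

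To prove \eqref{composn:d:upper}, I would start from \eqref{IRLdef:intupper1} and apply the Leibniz rule for differentiation under the integral sign. This produces three terms: a boundary contribution at $t=x$ of the form $(x-x)^{\mu-1}f(x)$, a boundary contribution at $t=yx$ of the form $-y(x-yx)^{\mu-1}f(yx)$ (the negative sign arising because the lower limit is moving), and a volume integral $(\mu-1)\int_{yx}^{x}(x-t)^{\mu-2}f(t)\,\mathrm{d}t$. The hypothesis $\mathrm{Re}(\mu)>1$ makes the $t=x$ endpoint vanish, since $(x-x)^{\mu-1}=0$. Collecting the surviving $t=yx$ boundary term and rewriting the remaining integral via $\Gamma(\mu)=(\mu-1)\Gamma(\mu-1)$ as $\prescript{RL}{0}D^{1-\mu}_x\{f(x);y\}$ gives exactly \eqref{composn:d:upper}.

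To prove \eqref{composn:upper:d}, I would again begin from \eqref{IRLdef:intupper1} applied to $f'$, and integrate by parts to transfer the derivative from $f$ onto the kernel $(x-t)^{\mu-1}$. The endpoint contribution $\bigl[(x-t)^{\mu-1}f(t)\bigr]_{t=yx}^{t=x}$ evaluates to $-(x-yx)^{\mu-1}f(yx)$, where again the $t=x$ term vanishes thanks to $\mathrm{Re}(\mu)>1$, and crucially there is no $f(0)$ contribution because the lower limit is $yx$ rather than $0$. The leftover integral $(\mu-1)\int_{yx}^{x}(x-t)^{\mu-2}f(t)\,\mathrm{d}t$ again combines with the $\Gamma$ prefactor to produce $\prescript{RL}{0}D^{1-\mu}_x\{f(x);y\}$, yielding \eqref{composn:upper:d}.

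The main subtlety to highlight is the vanishing of the $t=x$ endpoint: for $\mathrm{Re}(\mu)>1$ it is automatic, but this is precisely why the restriction cannot be weakened without changing the form of the identities; at $\mathrm{Re}(\mu)=1$ one would inherit an ambiguous $0^{0}$, and for $0<\mathrm{Re}(\mu)<1$ the integration by parts would require a principal-value or limiting interpretation at the singularity $t\to x$. Apart from this point the argument is a routine parallel to Theorem \ref{composn:lower}, with the notable simplification that no $f(0)$ term appears in either identity, reflecting the fact that $t=0$ lies outside the domain of integration of the upper incomplete operator.
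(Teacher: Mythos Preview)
Your proposal is correct and follows essentially the same approach as the paper: differentiate \eqref{IRLdef:intupper1} via the Leibniz rule with two moving limits for \eqref{composn:d:upper}, integrate by parts for \eqref{composn:upper:d}, use $\mathrm{Re}(\mu)>1$ to kill the $t=x$ endpoint, and identify the remaining integral as $\prescript{RL}{0}D^{1-\mu}_x\{f(x);y\}$ via $\Gamma(\mu)=(\mu-1)\Gamma(\mu-1)$. Your additional remarks on why the restriction $\mathrm{Re}(\mu)>1$ cannot be relaxed and on the absence of an $f(0)$ term are accurate and go slightly beyond what the paper states explicitly.
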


\begin{proof}
To prove \eqref{composn:d:upper}, we start from the definition \eqref{IRLdef:intupper1} and use the standard method for differentiating with respect to $x$ an integral expression whose $x$-dependence is in the integrand and in both bounds of integration:
\begin{align*}
\frac{\mathrm{d}}{\mathrm{d}x}\Big(\prescript{RL}{0}D_x^{-\mu}\{f(x);y\}\Big)&=\frac{\mathrm{d}}{\mathrm{d}x}\left(\frac{1}{\Gamma(\mu)}\int_{yx}^x(x-t)^{\mu-1}f(t)\,\mathrm{d}t\right) \\
&=\frac{1}{\Gamma(\mu)}\left((x-x)^{\mu-1}f(x)-y(x-yx)^{\mu-1}f(yx)+\int_{yx}^x(\mu-1)(x-t)^{\mu-2}f(t)\,\mathrm{d}t\right) \\
&=\frac{-y(1-y)^{\mu-1}x^{\mu-1}f(yx)}{\Gamma(\mu)}+\frac{\mu-1}{\Gamma(\mu)}\int_{yx}^x(x-t)^{\mu-2}f(t)\,\mathrm{d}t \\
&=\frac{-y(1-y)^{\mu-1}}{\Gamma(\mu)}x^{\mu-1}f(yx)+\prescript{RL}{0}D^{1-\mu}_x\{f(x);y\},
\end{align*}
as required, where in the third line we used the assumption that $\mathrm{Re}(\mu)>1$.

To prove \eqref{composn:upper:d}, we again start from the definition \eqref{IRLdef:intupper1} and use integration by parts:
\begin{align*}
\prescript{RL}{0}D_x^{-\mu}\{f'(x);y\}&=\frac{1}{\Gamma(\mu)}\int_{yx}^x(x-t)^{\mu-1}f'(t)\,\mathrm{d}t \\
&=\frac{1}{\Gamma(\mu)}\left(\Big[(x-t)^{\mu-1}f(t)\Big]_{t=yx}^{t=x}+\int_{yx}^x(\mu-1)(x-t)^{\mu-2}f(t)\,\mathrm{d}t\right) \\
&=\frac{1}{\Gamma(\mu)}\Big((x-x)^{\mu-1}f(x)-(x-yx)^{\mu-1}f(xy)\Big)+\frac{\mu-1}{\Gamma(\mu)}\int_{yx}^x(x-t)^{\mu-2}f(t)\,\mathrm{d}t \\
&=\frac{x^{\mu-1}}{\Gamma(\mu)}\left(-(1-y)^{\mu-1}f(xy)\right)+\prescript{RL}{0}D^{1-\mu}_x\{f(x);y\},
\end{align*}
as required, where again we used $\mathrm{Re}(\mu)>1$ in the third line.
\end{proof}

The above Theorems \ref{composn:lower} and \ref{composn:upper} can be used, in the same way as discussed at the start of this section, to construct analytic continuations of $\prescript{RL}{0}D^{\mu}_x[f(x);y]$ and $\prescript{RL}{0}D^{\mu}_x\{f(x);y\}$ which are valid for $\mathrm{Re}(\mu)\geq0$ (fractional derivatives) as well as for $\mathrm{Re}(\mu)<0$ (fractional integrals). The definitions are stated formally in Definitions \ref{IRLdef:derlower} and \ref{IRLdef:derupper}.

It is important to note that the existing formulae for $\prescript{RL}{0}D^{\mu}_x[f(x);y]$ and $\prescript{RL}{0}D^{\mu}_x\{f(x);y\}$ given by Definitions \ref{IRL:lowerdefn}, \ref{IRL:upperdefn1}, and \ref{IRL:upperdefn2} are analytic on the open left half-plane as functions of the complex variable $\mu$. Thus, the concept of ``analytic continuation'' outside of this domain makes sense.

\begin{defn}
\label{IRLdef:derlower}
The $\mu$th \textbf{lower incomplete Riemann--Liouville fractional derivative} of a function $f$ is defined by using the equation \eqref{composn:d:lower} for each successive region
\begin{equation}
\label{regions}
0\leq\mathrm{Re}(\mu)<1\quad,\quad1\leq\mathrm{Re}(\mu)<2\quad,\quad2\leq\mathrm{Re}(\mu)<3\quad,\quad\dots
\end{equation}
In other words, we define
\begin{equation}
\label{IRLdef:derlower1}
\prescript{RL}{0}D^{\mu}_x[f(x);y]=\frac{\mathrm{d}}{\mathrm{d}x}\Big(\prescript{RL}{0}D_x^{\mu-1}[f(x);y]\Big)-\frac{y(1-y)^{-\mu}}{\Gamma(1-\mu)}x^{-\mu}f(xy),
\end{equation}
for $\mu$ in each of the regions \eqref{regions} successively, and thence on the entire half-plane $\mathrm{Re}(\mu)\geq0$.
\end{defn}

\begin{defn}
\label{IRLdef:derupper}
The $\mu$th \textbf{upper incomplete Riemann--Liouville fractional derivative} of a function $f$ is defined by using the equation \eqref{composn:d:upper} for $\mu$ in each of the regions \eqref{regions} successively. In other words, we define
\begin{equation}
\label{IRLdef:derupper1}
\prescript{RL}{0}D^{\mu}_x\{f(x);y\}=\frac{\mathrm{d}}{\mathrm{d}x}\Big(\prescript{RL}{0}D_x^{\mu-1}\{f(x);y\}\Big)+\frac{y(1-y)^{-\mu}}{\Gamma(1-\mu)}x^{-\mu}f(xy),
\end{equation}
to get an analytic continuation to the entire half-plane $\mathrm{Re}(\mu)\geq0$.
\end{defn}

The above work has established that it is possible to define fractional derivatives as well as fractional integrals in the incomplete Riemann--Liouville context. However, they would still be difficult to compute when $\mathrm{Re}(\mu)$ is large, requiring many iterations of the equations \eqref{IRLdef:derlower1} and \eqref{IRLdef:derupper1}. It is much easier to use the direct formulae given by the following theorems.

\begin{thm}
\label{IRLupper:RL}
We have the following exact equivalence, valid for all $\mu\in\mathbb{C}$ and all functions $f$ such that the operators are defined:
\begin{equation}
\label{IRLupper:RL:eqn}
\prescript{RL}{0}D_x^{\mu}\{f(x);y\}=\prescript{RL}{xy}D_x^{\mu}f(x).
\end{equation}
\end{thm}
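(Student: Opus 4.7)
The plan is to establish the identity in two stages, distinguishing between the integral regime and the derivative regime, and to bridge them by analytic continuation in $\mu$.

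First, for $\mathrm{Re}(\mu) < 0$, I would prove the identity by direct inspection of the integral representations. The left-hand side, by Definition \ref{IRL:upperdefn1} (or \ref{IRL:upperdefn2}, depending on the function space), is given by \eqref{IRLdef:intupper1} and equals $\frac{1}{\Gamma(-\mu)}\int_{yx}^{x}(x-t)^{-\mu-1}f(t)\,\mathrm{d}t$. The right-hand side, by the definition \eqref{RLdef:int} of the classical Riemann--Liouville fractional integral with lower bound $a=xy$, is exactly the same expression, since $yx$ and $xy$ denote the same real number. Hence both operators coincide identically on the open left half-plane of $\mu$.

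Second, for $\mathrm{Re}(\mu) \geq 0$, I would invoke the uniqueness of analytic continuation. The left-hand side has been constructed in Definition \ref{IRLdef:derupper} precisely as the analytic continuation in $\mu$ of the integral expression, built inductively via the recursion \eqref{IRLdef:derupper1}. The right-hand side, viewed for each fixed $x$ as a function of $\mu$ with the lower bound $a=xy$ held as a parameter, is likewise the analytic continuation in $\mu$ of the same integral expression (this is the standard viewpoint already articulated around \eqref{RL:fnleqn}). Since both sides are meromorphic in $\mu$ on $\mathbb{C}$ and agree on the open left half-plane, they must agree wherever both are defined.

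The main obstacle is ensuring that the two analytic continuations genuinely match, since the recursion \eqref{IRLdef:derupper1} defining the left-hand side carries an explicit boundary correction term $\frac{y(1-y)^{-\mu}}{\Gamma(1-\mu)}x^{-\mu}f(xy)$, whereas the classical recursion $\prescript{RL}{a}D_x^{\mu}f(x) = \frac{\mathrm{d}}{\mathrm{d}x}\prescript{RL}{a}D_x^{\mu-1}f(x)$ for the right-hand side has no such correction. The resolution is that in $\prescript{RL}{xy}D_x^{\mu}f(x)$ the lower bound $xy$ is frozen during the differentiation in the classical definition, so the two recursions are tautologically different unless one accounts for the chain rule in $x$. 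A clean way to make this rigorous, avoiding subtleties about differentiating an $x$-dependent lower bound, is to forgo any direct recursion comparison: simply observe that for every fixed $x > 0$, the function $\mu \mapsto \prescript{RL}{xy}D_x^{\mu}f(x)$ is meromorphic on $\mathbb{C}$ by the classical theory, and since analytic continuation from the half-plane $\mathrm{Re}(\mu)<0$ is unique, it must coincide with the analytic continuation $\mu \mapsto \prescript{RL}{0}D_x^{\mu}\{f(x);y\}$ constructed by Definition \ref{IRLdef:derupper}. This closes the argument cleanly without needing to verify the match of recursions term by term.
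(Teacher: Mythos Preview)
Your proposal is correct and follows essentially the same approach as the paper: verify the identity directly from the integral definitions for $\mathrm{Re}(\mu)<0$, then extend to all $\mu$ by analytic continuation. The paper's proof is terser, simply asserting that both sides are analytic in $\mu$, whereas you explicitly flag and resolve the subtlety about the $x$-dependent lower bound by freezing $x$ before invoking uniqueness of continuation; this is a worthwhile clarification but not a different method.
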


\begin{proof}
For $\mathrm{Re}(\mu)<0$, this follows immediately from the integral definitions of the operators. Starting from the formula \eqref{IRLdef:intupper1}, we have:
\begin{equation*}
\prescript{RL}{0}D_x^{\mu}\{f(x);y\}=\frac{1}{\Gamma(-\mu)}\int_{yx}^x(x-t)^{-\mu-1}f(t)\,\mathrm{d}t=\prescript{RL}{xy}D_x^{\mu}f(x).
\end{equation*}
Having proved the result for $\mathrm{Re}(\mu)<0$, we can now extend it to all $\mu\in\mathbb{C}$ by analytic continuation, since both sides of \eqref{IRLupper:RL:eqn} are analytic as functions of $\mu$.
\end{proof}

\begin{remark}
Note that the result of Theorem \ref{IRLupper:RL} does not mean the upper incomplete RL operator is just a special case of the usual RL operator. The theory is different in the incomplete case, due to the $x$-dependence appearing in a new place in the expression. The result is important, but it does not reduce incomplete RL fractional calculus to merely a subset of RL fractional calculus.

For example, although it is true that \[\frac{\mathrm{d}}{\mathrm{d}x}\left(\prescript{RL}{c}D^{\mu}f(x)\right)=\prescript{RL}{c}D^{\mu+1}f(x)\] for any constant $c$, this result is \textit{not} true when $c$ is replaced by $xy$ as in \eqref{IRLupper:RL:eqn}. Instead, we have the differentiation relation \eqref{composn:d:upper} which was already proved in Theorem \ref{composn:upper}. Or again, although the operator $\prescript{RL}{c}D^{\mu}$ has a semigroup property in $\mu$ for any constant $c$, the operator $\prescript{RL}{xy}D^{\mu}$ does not. (We explore the semigroup property for our operators more thoroughly in Section \ref{sec:further} below.)
\end{remark}

\begin{thm}
\label{IRLlower:extension}
The formulae \eqref{IRLdef:intlower1}--\eqref{IRLdef:intlower3} are valid 
expressions for $\prescript{RL}{0}D_x^{-\mu}[f(x);y]$ for all $\mu\in\mathbb{C}$, not only for $\mathrm{Re}(\mu)>0$.
\end{thm}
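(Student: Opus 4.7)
The plan is to prove the identity by analytic continuation. Setting
$$G_x(\mu) := \frac{1}{\Gamma(\mu)}\int_0^{yx}(x-t)^{\mu-1}f(t)\,\mathrm{d}t,$$
I would first show that $G_x$ is entire as a function of $\mu$; then observe that $G_x$ agrees with $\prescript{RL}{0}D_x^{-\mu}[f(x);y]$ on the half-plane $\mathrm{Re}(\mu) > 0$ by Definition \ref{IRL:lowerdefn}; and finally invoke the identity theorem together with Definition \ref{IRLdef:derlower} to extend the equality to all $\mu \in \mathbb{C}$.

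The first step, establishing entireness of $G_x$, relies on the defining feature of the incomplete construction: the integration variable $t$ is constrained to $[0, yx]$, so $x - t \geq x(1-y) > 0$ throughout, and the kernel $(x-t)^{\mu-1}$ never approaches its branch-point singularity at $t = x$. Consequently, for each $\mu \in \mathbb{C}$ the integrand admits the uniform bound $|f(t)|\cdot\max\!\bigl(x^{\mathrm{Re}(\mu)-1},\,(x(1-y))^{\mathrm{Re}(\mu)-1}\bigr)$, which is integrable whenever $f \in L^1[0, yb]$. A standard dominated-convergence or Morera argument then shows that $\mu \mapsto \int_0^{yx}(x-t)^{\mu-1}f(t)\,\mathrm{d}t$ is entire, and multiplying by the entire function $1/\Gamma(\mu)$ keeps $G_x$ entire.

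With entireness in hand, the rest is immediate. On the open half-plane $\mathrm{Re}(\mu) > 0$ the identity $G_x(\mu) = \prescript{RL}{0}D_x^{-\mu}[f(x);y]$ is the very content of Definition \ref{IRL:lowerdefn}, while on $\mathrm{Re}(\mu) \leq 0$ the right-hand side is defined, via Definition \ref{IRLdef:derlower} applied with $\nu = -\mu$, as the analytic continuation of that same function. Since $G_x$ is analytic on all of $\mathbb{C}$ and coincides with this continuation on an open set, the identity theorem forces equality throughout $\mathbb{C}$. This handles formula \eqref{IRLdef:intlower1}; the equivalent expressions \eqref{IRLdef:intlower2} and \eqref{IRLdef:intlower3} then follow by the substitutions $t = ux$ and $u = wy$, which are elementary and $\mu$-independent. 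The main difficulty anticipated is really conceptual rather than technical: one must recognize that the obstruction which normally forces RL fractional derivatives to be defined via differentiation of integrals — the non-integrable blowup of the power kernel at the upper endpoint — is simply absent in the incomplete setting, since the integration terminates strictly short of $t = x$.
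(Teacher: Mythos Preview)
Your proposal is correct and rests on the same key observation as the paper's proof: since the integration interval $[0,yx]$ stays strictly away from the singularity at $t=x$, the integrand remains bounded and the restriction $\mathrm{Re}(\mu)>0$ is never actually needed. The paper's proof is a terse version of exactly this point, while you have fleshed out the analytic-continuation framework (entireness via Morera/dominated convergence, then the identity theorem against Definition~\ref{IRLdef:derlower}) more explicitly and rigorously than the paper does.
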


\begin{proof}
The restriction $\mathrm{Re}(\mu)>0$ was never actually required for these formulae. It is required for the definition of the usual Riemann--Liouville integral, because the integrand of $\int_0^{x}(x-t)^{-\mu-1}f(t)\,\mathrm{d}t$ has a singularity at $t=x$. But when the integral is restricted to $[0,yx]$ instead of $[0,x]$, this singularity is no longer part of the domain. The same argument holds for each of the integrals in \eqref{IRLdef:intlower1}--\eqref{IRLdef:intlower3}: respectively, the points $t=x$ in \eqref{IRLdef:intlower1}, $u=1$ in \eqref{IRLdef:intlower2}, and $w=\frac{1}{y}$ in \eqref{IRLdef:intlower3} are excluded from the domain of integration.
\end{proof}

The importance of Theorems \ref{IRLupper:RL} and \ref{IRLlower:extension} is that they are easier to use and apply than Definitions \ref{IRLdef:derlower} and \ref{IRLdef:derupper} as expressions for the upper and lower incomplete RL derivatives. For the incomplete RL integrals, we already have the original formulae \eqref{IRLdef:intlower1}--\eqref{IRLdef:intlower3} and \eqref{IRLdef:intupper1}--\eqref{IRLdef:intupper3} which can be applied as in the original RL model; but for the incomplete RL derivatives, it is much easier to use the formulae \eqref{IRLdef:intlower1}--\eqref{IRLdef:intlower3} and \eqref{IRLupper:RL:eqn} than iterations of the formulae \eqref{IRLdef:derlower1} and \eqref{IRLdef:derupper1}.

As examples to illustrate the above theorems, we compute the incomplete fractional derivatives of some simple functions, and verify that all the formulae considered above are consistent.

\begin{example}
\label{power1}
We consider the function $f(x)=x^{\lambda}$. It is known \cite[Theorems 19--20]{ozarslan-ustaoglu1} that the incomplete fractional integrals of this function are given by
\begin{align}
\label{power1:intlower} \prescript{RL}{0}D^{\mu}_x[x^{\lambda};y]&=\frac{B_y(\lambda+1,-\mu)}{\Gamma(-\mu)}x^{\lambda-\mu},\quad\quad\mathrm{Re}(\lambda)>-1,\mathrm{Re}(\mu)<0;\\
\label{power1:intupper} \prescript{RL}{0}D^{\mu}_x\{x^{\lambda};y\}&=\frac{B_{1-y}(-\mu,\lambda+1)}{\Gamma(-\mu)}x^{\lambda-\mu},\quad\quad\mathrm{Re}(\lambda)>-1,\mathrm{Re}(\mu)<0.
\end{align}
By analytic continuation, we expect that the same expressions \eqref{power1:intlower} and \eqref{power1:intupper} will be valid for all $\mu\in\mathbb{C}$, i.e. for fractional derivatives as well as fractional integrals. This can be verified using Definitions \ref{IRLdef:derlower} and \ref{IRLdef:derupper}, as follows. 

Firstly, lower incomplete. For $0\leq\mathrm{Re}(\mu)<1$, we substitute the known expression \eqref{power1:intlower} for $\prescript{RL}{0}D^{\mu-1}_x[x^{\lambda};y]$ into the identity \eqref{IRLdef:derlower1} to get:
\begin{align*}
\prescript{RL}{0}D^{\mu}_x[f(x);y]&=\frac{\mathrm{d}}{\mathrm{d}x}\Big(\prescript{RL}{0}D_x^{\mu-1}[f(x);y]\Big)-\frac{y(1-y)^{-\mu}}{\Gamma(1-\mu)}x^{-\mu}f(xy) \\
&=\frac{\mathrm{d}}{\mathrm{d}x}\left(\frac{B_y(\lambda+1,1-\mu)}{\Gamma(1-\mu)}x^{\lambda-\mu+1}\right)-\frac{y(1-y)^{-\mu}}{\Gamma(1-\mu)}x^{-\mu}(xy)^{\lambda} \\
&=(\lambda-\mu+1)\frac{B_y(\lambda+1,1-\mu)}{\Gamma(1-\mu)}x^{\lambda-\mu}-\frac{y^{\lambda+1}(1-y)^{-\mu}}{\Gamma(1-\mu)}x^{\lambda-\mu} \\
&=\frac{(\lambda-\mu+1)B_y(\lambda+1,1-\mu)-y^{\lambda+1}(1-y)^{-\mu}}{\Gamma(1-\mu)}x^{\lambda-\mu}.
\end{align*}
The following is a natural property of the incomplete beta function, following from integration by parts applied to the defining integrals: \[(\lambda-\mu+1)B_y(\lambda+1,1-\mu)-y^{\lambda+1}(1-y)^{-\mu}=-\mu B_y(\lambda+1,-\mu).\] This confirms the expression \eqref{power1:intlower} for the lower incomplete derivative when $0\leq\mathrm{Re}(\mu)<1$.

The same argument works to confirm it for $1\leq\mathrm{Re}(\mu)<2$, $2\leq\mathrm{Re}(\mu)<3$, etc., since there was no assumption on the value of $\mu$ in the above manipulations of incomplete beta functions. Thus, as expected, \eqref{power1:intlower} is valid for all $\mu\in\mathbb{C}$.

Secondly, upper incomplete. For $0\leq\mathrm{Re}(\mu)<1$, we substitute the known expression \eqref{power1:intupper} for $\prescript{RL}{0}D^{\mu-1}_x\{x^{\lambda};y\}$ into the identity \eqref{IRLdef:derupper1} to get:
\begin{align*}
\prescript{RL}{0}D^{\mu}_x\{f(x);y\}&=\frac{\mathrm{d}}{\mathrm{d}x}\Big(\prescript{RL}{0}D_x^{\mu-1}\{f(x);y\}\Big)+\frac{y(1-y)^{-\mu}}{\Gamma(1-\mu)}x^{-\mu}f(xy) \\
&=\frac{\mathrm{d}}{\mathrm{d}x}\left(\frac{B_{1-y}(1-\mu,\lambda+1)}{\Gamma(1-\mu)}x^{\lambda-\mu+1}\right)+\frac{y(1-y)^{-\mu}}{\Gamma(1-\mu)}x^{-\mu}(xy)^{\lambda} \\
&=(\lambda-\mu+1)\frac{B_{1-y}(1-\mu,\lambda+1)}{\Gamma(1-\mu)}x^{\lambda-\mu}+\frac{y^{\lambda+1}(1-y)^{-\mu}}{\Gamma(1-\mu)}x^{\lambda-\mu} \\
&=\frac{(\lambda-\mu+1)B_{1-y}(1-\mu,\lambda+1)+y^{\lambda+1}(1-y)^{-\mu}}{\Gamma(1-\mu)}x^{\lambda-\mu}.
\end{align*}
As before, it is a natural property of the incomplete beta function that \[(\lambda-\mu+1)B_{1-y}(1-\mu,\lambda+1)+y^{\lambda+1}(1-y)^{-\mu}=-\mu B_{1-y}(-\mu,\lambda+1).\] This confirms the expression \eqref{power1:intupper} for the upper incomplete derivative when $0\leq\mathrm{Re}(\mu)<1$.

The same argument works to confirm it for $1\leq\mathrm{Re}(\mu)<2$, $2\leq\mathrm{Re}(\mu)<3$, etc., since there was no assumption on the value of $\mu$ in the above manipulations of incomplete beta functions. Thus, as expected, \eqref{power1:intupper} is valid for all $\mu\in\mathbb{C}$. \qed
\end{example}

\begin{example}
\label{power2}

We consider the function $f(x)=x^{\lambda-1}(1-x)^{-\alpha}$. The incomplete fractional integrals of this function can be computed using the definitions \eqref{IRLdef:intlower2} and \eqref{IRLdef:intupper2}:
\begin{align*}
\prescript{RL}{0}D^{\mu}_x[x^{\lambda-1}(1-x)^{-\alpha};y]&=\frac{x^{-\mu}}{\Gamma(-\mu)}\int_0^y(1-u)^{-\mu-1}(ux)^{\lambda-1}(1-ux)^{-\alpha}\,\mathrm{d}u \\
&=\frac{x^{\lambda-\mu-1}}{\Gamma(-\mu)}\int_0^y(1-u)^{-\mu-1}(u)^{\lambda-1}(1-ux)^{-\alpha}\,\mathrm{d}u; \\
\prescript{RL}{0}D^{\mu}_x\{x^{\lambda-1}(1-x)^{-\alpha};y\}&=\frac{x^{-\mu}}{\Gamma(-\mu)}\int_y^1(1-u)^{-\mu-1}(ux)^{\lambda-1}(1-ux)^{-\alpha}\,\mathrm{d}u \\
&=\frac{x^{\lambda-\mu-1}}{\Gamma(-\mu)}\int_y^1(1-u)^{-\mu-1}(u)^{\lambda-1}(1-ux)^{-\alpha}\,\mathrm{d}u.
\end{align*}
Using the integral expressions for the incomplete hypergeometric functions, namely \cite[Eq. (27)]{ozarslan-ustaoglu1} for lower incomplete and its analogue for upper incomplete, we can rewrite these as follows:
\begin{align}
\nonumber \prescript{RL}{0}D^{\mu}_x[x^{\lambda-1}(1-x)^{-\alpha};y]&=\frac{x^{\lambda-\mu-1}}{\Gamma(-\mu)}B(\lambda,-\mu)\prescript{}{2}F_1(\alpha,[\lambda,\lambda-\mu;y];x) \\
\label{power2:intlower} &=\frac{\Gamma(\lambda)}{\Gamma(\lambda-\mu)}x^{\lambda-\mu-1}\prescript{}{2}F_1(\alpha,[\lambda,\lambda-\mu;y];x); \\
\nonumber \prescript{RL}{0}D^{\mu}_x\{x^{\lambda-1}(1-x)^{-\alpha};y\}&=\frac{x^{\lambda-\mu-1}}{\Gamma(-\mu)}B(\lambda,-\mu)\prescript{}{2}F_1(\alpha,\{\lambda,\lambda-\mu;y\};x) \\
\label{power2:intupper} &=\frac{\Gamma(\lambda)}{\Gamma(\lambda-\mu)}x^{\lambda-\mu-1}\prescript{}{2}F_1(\alpha,\{\lambda,\lambda-\mu;y\};x).
\end{align}
These identities are valid for $\mathrm{Re}(\mu)<0$, $\mathrm{Re}(\lambda)>0$, $\mathrm{Re}(\alpha)>0$, and $|x|<1$. By analytic continuation, we expect that the same expressions \eqref{power2:intlower} and \eqref{power2:intupper} should be valid for all $\mu\in\mathbb{C}$, i.e. for fractional derivatives as well as fractional integrals. Using Definitions \ref{IRLdef:derlower} and \ref{IRLdef:derupper}, we can argue as follows.

Firstly, lower incomplete. For $0\leq\mathrm{Re}(\mu)<1$, we substitute the known expression \eqref{power2:intlower} for $\prescript{RL}{0}D^{\mu-1}_x[x^{\lambda};y]$ into the identity \eqref{IRLdef:derlower1} to get:
\begin{align*}
\prescript{RL}{0}D^{\mu}_x[f(x);y]&=\frac{\mathrm{d}}{\mathrm{d}x}\Big(\prescript{RL}{0}D_x^{\mu-1}[f(x);y]\Big)-\frac{y(1-y)^{-\mu}}{\Gamma(1-\mu)}x^{-\mu}f(xy) \\
&=\frac{\mathrm{d}}{\mathrm{d}x}\left(\frac{\Gamma(\lambda)}{\Gamma(\lambda-\mu+1)}x^{\lambda-\mu}\prescript{}{2}F_1(\alpha,[\lambda,\lambda-\mu+1;y];x)\right)-\frac{y(1-y)^{-\mu}}{\Gamma(1-\mu)}x^{-\mu}(xy)^{\lambda-1}(1-xy)^{-\alpha} \\
&=\frac{\Gamma(\lambda)}{\Gamma(\lambda-\mu+1)}(\lambda-\mu)x^{\lambda-\mu-1}\prescript{}{2}F_1(\alpha,[\lambda,\lambda-\mu+1;y];x) \\
&\hspace{2cm}+\frac{\Gamma(\lambda)}{\Gamma(\lambda-\mu+1)}x^{\lambda-\mu}\left(\frac{\alpha\lambda}{\lambda-\mu+1}\right)\prescript{}{2}F_1(\alpha+1,[\lambda+1,\lambda-\mu+2;y];x) \\
&\hspace{2cm}-\frac{y^{\lambda}(1-y)^{-\mu}}{\Gamma(1-\mu)}x^{\lambda-\mu-1}(1-xy)^{-\alpha} \\
&=\frac{\Gamma(\lambda)}{\Gamma(\lambda-\mu)}x^{\lambda-\mu-1}\bigg[\prescript{}{2}F_1(\alpha,[\lambda,\lambda-\mu+1;y];x) \\
&\hspace{1cm}+\frac{\alpha\lambda}{(\lambda-\mu)(\lambda-\mu+1)}x\prescript{}{2}F_1(\alpha+1,[\lambda+1,\lambda-\mu+2;y];x)+\frac{y^{\lambda}(1-y)^{-\mu}}{\mu B(\lambda,-\mu)}(1-xy)^{-\alpha}\bigg] \\
&=\frac{\Gamma(\lambda)}{\Gamma(\lambda-\mu)}x^{\lambda-\mu-1}\prescript{}{2}F_1(\alpha,[\lambda,\lambda-\mu;y];x),
\end{align*}
where we have used identities from \cite[Theorems 12--13]{ozarslan-ustaoglu1} to simplify the expressions involving the incomplete hypergeometric function. This confirms the expression \eqref{power2:intlower} for the lower incomplete derivative when $0\leq\mathrm{Re}(\mu)<1$.

The same argument works to confirm it for $1\leq\mathrm{Re}(\mu)<2$, $2\leq\mathrm{Re}(\mu)<3$, etc., since there was no assumption on the value of $\mu$ in the above manipulations of incomplete hypergeometric functions. Thus, as expected, \eqref{power2:intlower} is valid for all $\mu\in\mathbb{C}$.

For the upper incomplete case, we can deduce \eqref{power2:intupper} from \eqref{power2:intlower} using the fact that their sum is the usual Riemnn--Liouville fractional differintegral which is well known \cite{miller-ross}. \qed
\end{example}

\begin{remark}
\label{inversion}
Given Examples \ref{power1} and \ref{power2}, we can immediately verify that the derivative and integral operators we have defined do not have inverse properties. For instance, applying an incomplete fractional integral and then an incomplete fractional derivative to a simple power function yields the following:
\begin{align*}
\prescript{RL}{0}D^{\mu}_x\Big[\prescript{RL}{0}I^{\mu}_x[x^{\lambda};y];y\Big]&=\prescript{RL}{0}D^{\mu}_x\left[\frac{B_y(\lambda+1,\mu)}{\Gamma(\mu)}x^{\lambda+\mu};y\right] \\
&=\frac{B_y(\lambda+1,\mu)}{\Gamma(\mu)}\prescript{RL}{0}D^{\mu}_x[x^{\lambda+\mu};y] \\
&=\frac{B_y(\lambda+1,\mu)}{\Gamma(\mu)}\left(\frac{B_y(\lambda+\mu+1,-\mu)}{\Gamma(-\mu)}x^{\lambda+\mu-\mu}\right) \\
&=\frac{B_y(\lambda+1,\mu)B_y(\lambda+\mu+1,-\mu)}{\Gamma(\mu)\Gamma(-\mu)}x^{\lambda}; \\
\prescript{RL}{0}D^{\mu}_x\Big\{\prescript{RL}{0}I^{\mu}_x\{x^{\lambda};y\};y\Big\}&=\prescript{RL}{0}D^{\mu}_x\left\{\frac{B_{1-y}(\mu,\lambda+1)}{\Gamma(\mu)}x^{\lambda+\mu};y\right\} \\
&=\frac{B_{1-y}(\mu,\lambda+1)}{\Gamma(\mu)}\prescript{RL}{0}D^{\mu}_x\{x^{\lambda+\mu};y\} \\
&=\frac{B_{1-y}(\mu,\lambda+1)}{\Gamma(\mu)}\left(\frac{B_{1-y}(-\mu,\lambda+\mu+1)}{\Gamma(-\mu)}x^{\lambda+\mu-\mu}\right) \\
&=\frac{B_{1-y}(\mu,\lambda+1)B_{1-y}(-\mu,\lambda+\mu+1)}{\Gamma(\mu)\Gamma(-\mu)}x^{\lambda}.
\end{align*}
Since neither $B_y(\lambda+1,\mu)B_y(\lambda+\mu+1,-\mu)$ nor $B_{1-y}(\mu,\lambda+1)B_{1-y}(-\mu,\lambda+\mu+1)$ are identically equal to $\Gamma(\mu)\Gamma(-\mu)$, we surmise that the incomplete fractional derivatives are not left inverses to the incomplete fractional integrals. This is one disadvantage of Definitions \ref{IRLdef:derlower} and \ref{IRLdef:derupper}, but it is counterbalanced by the advantages of a unified differintegral formula given by the analytic continuation method.
\end{remark}

\section{Further properties of incomplete Riemann--Liouville fractional calculus} \label{sec:further}

The previous section established rigorous definitions for incomplete Riemann--Liouville fractional calculus, by specifying function spaces on which the operators act, and defining fractional derivatives as well as fractional integrals in this model.

In the current section, we shall investigate further properties and results concerning these operators. Since the theory of incomplete Riemann--Liouville fractional calculus is still very new, there are many important properties which have yet to be examined, and useful theorems which have yet to be proved.

One fundamental question in any model of fractional calculus is whether the operators satisfy a \textbf{semigroup property}. In the standard Riemann--Liouville model, for example, the fractional integrals have a semigroup property while the fractional derivatives do not \cite{miller-ross,samko-kilbas-marichev}. What happens in the incomplete Riemann--Liouville model?

We have already seen in Remark \ref{inversion} that the incomplete fractional derivatives and integrals lack an inversion property, which would be a special case of the semigroup property for composition of fractional differintegral operators. A simple example is enough to verify that the semigroup property is not valid either for combinations of fractional integrals or for combinations of fractional derivatives:
\begin{align*}
\prescript{RL}{0}I^{\mu}_x\Big[\prescript{RL}{0}I^{\nu}_x[x^{\lambda};y];y\Big]&=\prescript{RL}{0}I^{\mu}_x\left[\frac{B_y(\lambda+1,\nu)}{\Gamma(\nu)}x^{\lambda+\nu};y\right] \\
&=\frac{B_y(\lambda+1,\nu)}{\Gamma(\nu)}\prescript{RL}{0}I^{\mu}_x[x^{\lambda+\nu};y] \\
&=\frac{B_y(\lambda+1,\nu)}{\Gamma(\nu)}\left(\frac{B_y(\lambda+\nu+1,\mu)}{\Gamma(\mu)}x^{\lambda+\mu+\nu}\right) \\
&=\frac{B_y(\lambda+1,\nu)B_y(\lambda+\nu+1,\mu)}{\Gamma(\mu)\Gamma(\nu)}x^{\lambda+\mu+\nu}; \\
\prescript{RL}{0}D^{\mu}_x\Big[\prescript{RL}{0}D^{\nu}_x[x^{\lambda};y];y\Big]&=\prescript{RL}{0}D^{\mu}_x\left[\frac{B_y(\lambda+1,-\nu)}{\Gamma(-\nu)}x^{\lambda-\nu};y\right] \\
&=\frac{B_y(\lambda+1,-\nu)}{\Gamma(-\nu)}\prescript{RL}{0}D^{\mu}_x[x^{\lambda-\nu};y] \\
&=\frac{B_y(\lambda+1,-\nu)}{\Gamma(-\nu)}\left(\frac{B_y(\lambda-\nu+1,-\mu)}{\Gamma(-\mu)}x^{\lambda-\mu-\nu}\right) \\
&=\frac{B_y(\lambda+1,-\nu)B_y(\lambda-\nu+1,-\mu)}{\Gamma(-\mu)\Gamma(-\nu)}x^{\lambda-\mu-\nu}; \\
\prescript{RL}{0}I^{\mu}_x\Big\{\prescript{RL}{0}I^{\nu}_x\{x^{\lambda};y\};y\Big\}&=\prescript{RL}{0}I^{\mu}_x\left\{\frac{B_{1-y}(\nu,\lambda+1)}{\Gamma(\nu)}x^{\lambda+\nu};y\right\} \\
&=\frac{B_{1-y}(\nu,\lambda+1)}{\Gamma(\nu)}\prescript{RL}{0}I^{\mu}_x\{x^{\lambda+\nu};y\} \\
&=\frac{B_{1-y}(\nu,\lambda+1)}{\Gamma(\nu)}\left(\frac{B_{1-y}(\mu,\lambda+\nu+1)}{\Gamma(\mu)}x^{\lambda+\mu+\nu}\right) \\
&=\frac{B_{1-y}(\nu,\lambda+1)B_{1-y}(\mu,\lambda+\nu+1)}{\Gamma(\mu)\Gamma(\nu)}x^{\lambda+\mu+\nu}; \\
\prescript{RL}{0}D^{\mu}_x\Big\{\prescript{RL}{0}D^{\nu}_x\{x^{\lambda};y\};y\Big\}&=\prescript{RL}{0}D^{\mu}_x\left\{\frac{B_{1-y}(-\nu,\lambda+1)}{\Gamma(-\nu)}x^{\lambda-\nu};y\right\} \\
&=\frac{B_{1-y}(-\nu,\lambda+1)}{\Gamma(-\nu)}\prescript{RL}{0}D^{\mu}_x\{x^{\lambda-\nu};y\} \\
&=\frac{B_{1-y}(-\nu,\lambda+1)}{\Gamma(-\nu)}\left(\frac{B_{1-y}(-\mu,\lambda-\nu+1)}{\Gamma(-\mu)}x^{\lambda-\mu-\nu}\right) \\
&=\frac{B_{1-y}(-\nu,\lambda+1)B_{1-y}(-\mu,\lambda-\nu+1)}{\Gamma(-\mu)\Gamma(-\nu)}x^{\lambda-\mu-\nu}.
\end{align*}
And there is no identity such as \[B_y(\lambda+1,\nu)B_y(\lambda+\nu+1,\mu)=B_y(\lambda+1,\mu+\nu)B(\mu,\nu)\] or \[B_{1-y}(\nu,\lambda+1)B_{1-y}(\mu,\lambda+\nu+1)=B_{1-y}(\mu+\nu,\lambda+1)B(\mu,\nu)\] for incomplete beta functions. Thus we surmise that there is no semigroup property for incomplete fractional differintegrals of either lower or upper type.

\begin{thm}
Let $b>0$, $0<y<1$, $x\in[0,b]$, and $f;[0,b]\rightarrow\mathbb{C}$.

If $f\in L^1[0,yb]$, then \[\lim_{\mu\rightarrow0^+}\prescript{RL}{0}I^{\mu}_x[f(x);y]=0,\] where $\mu\rightarrow0^+$ denotes convergence of $\mu$ towards $0$ within the right half plane $\mathrm{Re}(\mu)>0$.

If $f\in L^1[0,b]$ and $x$ is a Lebesgue point of $f$, then \[\lim_{\mu\rightarrow0^+}\prescript{RL}{0}I^{\mu}_x\{f(x);y\}=f(x),\] where $\mu\rightarrow0^+$ is as before.
\end{thm}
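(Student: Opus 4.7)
The plan is to treat the two claims separately, since the lower and upper incomplete integrals behave very differently as $\mathrm{Re}(\mu)$ approaches zero from within the right half plane. For the lower case, the kernel $(x-t)^{\mu-1}$ has no singularity on the interval of integration, so the integral stays bounded and everything is controlled by the simple pole of $\Gamma$ at $0$; for the upper case, the kernel does approach the singular point $t=x$, and this singularity is precisely what concentrates the measure at $x$ and delivers $f(x)$ in the limit.

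For the lower incomplete claim, I would fix $x \in (0, b]$ (the case $x=0$ is trivial since both bounds of the integral coincide). On $[0, yx]$ we have $x - t \in [x(1-y), x]$, bounded away from zero, so for $\mathrm{Re}(\mu) \in (0, 1/2]$ the estimate $|(x-t)^{\mu-1}| \leq \max\bigl((x(1-y))^{-1}, (x(1-y))^{-1/2}\bigr)$ shows that $(x-t)^{\mu-1} f(t)$ is dominated by a $\mu$-independent $L^1$ function. Dominated convergence then gives $\int_0^{yx}(x-t)^{\mu-1} f(t)\,\mathrm{d}t \to \int_0^{yx}(x-t)^{-1} f(t)\,\mathrm{d}t$, a finite quantity, and since $1/\Gamma(\mu) \to 0$ as $\mu \to 0$, the product tends to zero.

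For the upper incomplete claim, I would invoke Theorem \ref{IRLupper:RL} to rewrite $\prescript{RL}{0}I_x^\mu\{f(x);y\} = \prescript{RL}{yx}I_x^\mu f(x)$. For fixed $x > 0$ the right-hand side is a classical Riemann--Liouville fractional integral over the fixed interval $[yx, x]$, to which the classical theorem (see e.g.\ \cite{samko-kilbas-marichev}) that $\prescript{RL}{a}I_x^\mu f(x) \to f(x)$ as $\mathrm{Re}(\mu) \to 0^+$ at any Lebesgue point $x$ of $f \in L^1[a, b]$ applies directly. If a self-contained proof is preferred, one can substitute $s = x - t$ to rewrite the expression as $\frac{1}{\Gamma(\mu)}\int_0^{x(1-y)} s^{\mu-1} f(x-s)\,\mathrm{d}s$ and split $f(x-s) = f(x) + (f(x-s) - f(x))$: the constant piece contributes $f(x)(x(1-y))^\mu / \Gamma(\mu+1)$, which tends to $f(x)$, while the remainder is handled by partitioning the $s$-interval at some small $\delta > 0$, applying the Lebesgue-point property on $[0,\delta]$ and using the bound $s^{\mathrm{Re}(\mu)-1} \leq \delta^{\mathrm{Re}(\mu)-1}$ together with $1/\Gamma(\mu) \to 0$ on $[\delta, x(1-y)]$. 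The main obstacle is precisely this Lebesgue-point estimate, which is the only delicate calculation in the whole proof; reducing to the classical RL result via Theorem \ref{IRLupper:RL} bypasses it entirely and is, I believe, the cleanest approach.
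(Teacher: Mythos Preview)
Your proposal is correct. For the lower incomplete integral your argument coincides with the paper's: both observe that the integrand has no singularity on $[0,yx]$, so the integral stays bounded as $\mu\to 0^+$ while the prefactor $1/\Gamma(\mu)$ vanishes. You supply a more explicit dominated-convergence bound than the paper does, but the idea is identical.

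For the upper incomplete integral the approaches diverge. The paper gives a fully self-contained argument: it substitutes $t\mapsto x-t$, introduces $F(t)=\int_0^t f(x-u)\,\mathrm{d}u$ and $c(t)=F(t)/t-f(x)$, integrates by parts once to replace $f$ by $F$, and then carries out the $\epsilon$--$\delta$ split you sketch as your ``approach~2''. The integration by parts is exactly the device needed to convert the Lebesgue-point hypothesis (which controls averages $F(t)/t$, not pointwise values of $f(x-s)-f(x)$) into a usable estimate on $\int_0^\delta s^{\mu-1}c(t)\,\mathrm{d}t$; your sketch of approach~2 leaves this step implicit but correctly flags it as the delicate part. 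Your preferred approach~1, invoking Theorem~\ref{IRLupper:RL} to identify $\prescript{RL}{0}I_x^{\mu}\{f;y\}$ with the classical $\prescript{RL}{yx}I_x^{\mu}f$ for fixed $x$ and then citing the standard Lebesgue-point limit from \cite{samko-kilbas-marichev}, is a genuine shortcut the paper does not take. It is cleaner and shorter, at the cost of relying on an external reference; the paper's route is longer but keeps the argument self-contained.
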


\begin{proof}
Firstly, we consider the lower incomplete RL integral. Here we are considering the quantity \[\frac{1}{\Gamma(-\mu)}\int_0^{yx}(x-t)^{-\mu-1}f(t)\,\mathrm{d}t,\quad\quad\mu\rightarrow0^+.\] The gamma reciprocal function $\frac{1}{\Gamma(z)}$ is entire with a zero at $z=0$, while the integrand is a well-behaved function of $t$ everywhere on the domain $[0,yx]$, so the limit is equal to zero as required. (The reason this argument does not work for the classical RL integral is due to the singularity at $t=x$, which is not included in the domain of the lower incomplete RL integral.)

For the upper incomplete RL integral, we need a more complicated argument. Recall the definition of Lebesgue points, namely that $x$ is a Lebesgue point of $f$ if \[\lim_{t\rightarrow0}\frac{1}{t}\int_0^t\left(f(x-u)-f(x)\right)\,\mathrm{d}u=0.\] Define \[F(t)=\int_{x-t}^xf(u)\,\mathrm{d}u=\int_0^tf(x-u)\,\mathrm{d}u,\] so that \[c(t)\coloneqq\frac{F(t)}{t}-f(x)=\frac{1}{t}\int_0^t\left(f(x-u)-f(x)\right)\,\mathrm{d}u\rightarrow0\text{ as }t\rightarrow0,\] by the Lebesgue property. Now, starting from \eqref{IRLdef:intupper1} and using integration by parts, we have
\begin{align*}
\prescript{RL}{0}I_x^{\mu}\{f(x);y\}&=\frac{1}{\Gamma(\mu)}\int_{yx}^x(x-t)^{\mu-1}f(t)\,\mathrm{d}t \\
&=\frac{1}{\Gamma(\mu)}\int_0^{(1-y)x}t^{\mu-1}f(x-t)\,\mathrm{d}t=\frac{1}{\Gamma(\mu)}\int_0^{(1-y)x}t^{\mu-1}F'(t)\,\mathrm{d}t \\
&=\frac{1}{\Gamma(\mu)}\left[t^{\mu-1}F(t)\right]_0^{(1-y)x}-\frac{\mu-1}{\Gamma(\mu)}\int_0^{(1-y)x}t^{\mu-2}F(t)\,\mathrm{d}t \\
&=\frac{x^{\mu-1}(1-y)^{\mu-1}F((1-y)x)}{\Gamma(\mu)}-\lim_{t\rightarrow0}\left[\frac{t^{\mu}}{\Gamma(\mu)}\frac{F(t)}{t}\right]-\frac{1}{\Gamma(\mu-1)}\int_0^{(1-y)x}t^{\mu-2}\left(tc(t)+tf(x)\right)\,\mathrm{d}t \\
&=\frac{x^{\mu-1}(1-y)^{\mu-1}F((1-y)x)}{\Gamma(\mu)}-\lim_{t\rightarrow0}\left[\frac{t^{\mu}}{\Gamma(\mu)}f(x)\right] \\
&\hspace{4cm}-\frac{1}{\Gamma(\mu-1)}\int_0^{(1-y)x}t^{\mu-1}c(t)\,\mathrm{d}t-\frac{f(x)}{\Gamma(\mu-1)}\int_0^{(1-y)x}t^{\mu-1}\,\mathrm{d}t \\
&=\frac{x^{\mu}(1-y)^{\mu}}{\Gamma(\mu)}\left(\frac{F((1-y)x)}{(1-y)x}-\frac{\mu-1}{\mu}f(x)\right)-\frac{1}{\Gamma(\mu-1)}\int_0^{(1-y)x}t^{\mu-1}c(t)\,\mathrm{d}t.
\end{align*}
Write $X=(1-y)x$, so that
\begin{equation}
\label{Lebesgue:eqn1}
\prescript{RL}{0}I_x^{\mu}\{f(x);y\}=\frac{X^{\mu}}{\Gamma(\mu)}\left(\frac{F(X)}{X}-\frac{\mu-1}{\mu}f(x)\right)-\frac{1}{\Gamma(\mu-1)}\int_0^{X}t^{\mu-1}c(t)\,\mathrm{d}t.
\end{equation}
We know that $c(t)\rightarrow0$ as $t\rightarrow0$, so for any $\epsilon>0$, there exists $\delta>0$ such that $0<t<\delta\Rightarrow|c(t)|<\epsilon$. We fix $\epsilon$ and argue from \eqref{Lebesgue:eqn1} as follows.
\begin{multline*}
\prescript{RL}{0}I_x^{\mu}\{f(x);y\}-f(x)=\frac{X^{\mu}}{\Gamma(\mu)}\frac{F(X)}{X}-\left(\frac{\mu-1}{\mu}\frac{X^{\mu}}{\Gamma(\mu)}+1\right)f(x) \\ -\frac{1}{\Gamma(\mu-1)}\int_0^{\delta}t^{\mu-1}c(t)\,\mathrm{d}t-\frac{1}{\Gamma(\mu-1)}\int_{\delta}^{X}t^{\mu-1}c(t)\,\mathrm{d}t.
\end{multline*}
As $\mu\rightarrow0^+$, we have:
\begin{align*}
\frac{X^{\mu}}{\Gamma(\mu)}\frac{F(X)}{X}&\rightarrow0; \\
\frac{\mu-1}{\mu}\frac{X^{\mu}}{\Gamma(\mu)}&=\frac{(\mu-1)X^{\mu}}{\Gamma(\mu+1)}\rightarrow-1; \\
\left|\frac{1}{\Gamma(\mu-1)}\int_0^{\delta}t^{\mu-1}c(t)\,\mathrm{d}t\right|&\leq\frac{\delta^{\mu}\epsilon}{\mu|\Gamma(\mu-1)|}=\frac{|\mu-1|\delta^{\mu}\epsilon}{\Gamma(\mu+1)}\rightarrow\epsilon; \\
\frac{1}{\Gamma(\mu-1)}\int_{\delta}^{X}t^{\mu-1}c(t)\,\mathrm{d}t&\rightarrow0;
\end{align*}
and therefore
\[\lim_{\mu\rightarrow0^+}\Big|\prescript{RL}{0}I_x^{\mu}\{f(x);y\}-f(x)\Big|\leq\epsilon.
\]
But since $\epsilon>0$ was arbitrary, this means the limit must in fact be $0$, which concludes the proof.
\end{proof}

\begin{lem}
\label{Leibniz:lemma}
Let $b>0$, $0<y<1$, $\mu\in\mathbb{C}$, and $n\in\mathbb{N}$. For any function $f:[0,b]\rightarrow\mathbb{C}$ in the appropriate function spaces given by Definitions \ref{IRL:lowerdefn}, \ref{IRL:upperdefn1}, or \ref{IRL:upperdefn2}, we have the following results:
\begin{align}
\label{Leibniz:lemma:lower} \prescript{RL}{0}D_x^{\mu}[x^nf(x);y]&=\sum_{k=0}^n\binom{n}{k}x^{n-k}(-1)^k\frac{\Gamma(-\mu+k)}{\Gamma(-\mu)}\prescript{RL}{0}D_x^{\mu-k}[f(x);y]; \\
\label{Leibniz:lemma:upper} \prescript{RL}{0}D_x^{\mu}\{x^nf(x);y\}&=\sum_{k=0}^n\binom{n}{k}x^{n-k}(-1)^k\frac{\Gamma(-\mu+k)}{\Gamma(-\mu)}\prescript{RL}{0}D_x^{\mu-k}\{f(x);y\}.
\end{align}
\end{lem}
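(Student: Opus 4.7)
The plan is to reduce both identities to a single binomial expansion inside the defining integrals, and then extend to the full range of $\mu$ by the tools already established in the paper.

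First I would prove the identities in the case $\mathrm{Re}(\mu)<0$, where the integral representations \eqref{IRLdef:intlower1} and \eqref{IRLdef:intupper1} apply directly. The key algebraic observation is the binomial expansion
\[
t^n = \bigl(x-(x-t)\bigr)^n = \sum_{k=0}^{n}\binom{n}{k}x^{n-k}(-1)^{k}(x-t)^{k}.
\]
Substituting this into \eqref{IRLdef:intlower1} applied to $x^n f(x)$ and interchanging the (finite) sum with the integral yields
\[
\prescript{RL}{0}D_x^{\mu}[x^n f(x);y]=\sum_{k=0}^{n}\binom{n}{k}x^{n-k}(-1)^{k}\cdot\frac{1}{\Gamma(-\mu)}\int_0^{yx}(x-t)^{(-\mu+k)-1}f(t)\,\mathrm{d}t.
\]
Multiplying and dividing by $\Gamma(-\mu+k)$, the $k$th inner integral is exactly $\prescript{RL}{0}D_x^{\mu-k}[f(x);y]$, which produces \eqref{Leibniz:lemma:lower}. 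The same manipulation, with $\int_0^{yx}$ replaced by $\int_{yx}^x$, gives \eqref{Leibniz:lemma:upper}.

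Next I would upgrade the identities from $\mathrm{Re}(\mu)<0$ to arbitrary $\mu\in\mathbb{C}$. For the lower incomplete case this step is essentially free: by Theorem \ref{IRLlower:extension}, the integral formula \eqref{IRLdef:intlower1} is valid as an expression for $\prescript{RL}{0}D_x^{\mu}[g(x);y]$ for every $\mu\in\mathbb{C}$, so the calculation above goes through verbatim with $g(t)=t^n f(t)$. For the upper incomplete case one uses analytic continuation in $\mu$: both sides of \eqref{Leibniz:lemma:upper} are meromorphic functions of $\mu$ (the left-hand side by Definition \ref{IRLdef:derupper} together with Theorem \ref{IRLupper:RL}, and the right-hand side as a finite linear combination of such operators with meromorphic coefficients $\Gamma(-\mu+k)/\Gamma(-\mu)$). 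Since they coincide on the open half-plane $\mathrm{Re}(\mu)<0$, they coincide wherever they are both defined.

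The most delicate point will be the analytic-continuation step in the upper case, as it requires verifying that both the left- and right-hand sides of \eqref{Leibniz:lemma:upper} really are analytic in $\mu$ for $\mathrm{Re}(\mu)\geq 0$ and not just the formal analytic extensions supplied by Definition \ref{IRLdef:derupper}. This is where the identification $\prescript{RL}{0}D_x^{\mu}\{\cdot;y\}=\prescript{RL}{xy}D_x^{\mu}$ from Theorem \ref{IRLupper:RL} is useful: it presents the operator as the classical RL differintegral with a shifted base point, whose $\mu$-analyticity is standard. Once this is in place, the rest of the argument is just the short binomial manipulation above, and no further technical work is required.
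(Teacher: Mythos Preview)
Your proposal is correct and follows essentially the same approach as the paper: both rest on the binomial expansion $t^n=(x-(x-t))^n$ substituted into the integral definitions for $\mathrm{Re}(\mu)<0$, followed by analytic continuation to general $\mu$. Your extension step is slightly more explicit than the paper's (invoking Theorem~\ref{IRLlower:extension} for the lower case and Theorem~\ref{IRLupper:RL} to justify analyticity in the upper case), but this is only added detail rather than a different route.
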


\begin{proof}
The binomial theorem gives
\[t^n=(x-(x-t))^n=\sum_{k=0}^n\binom{n}{k}x^{n-k}(-1)^k(x-t)^k,\] so starting from the definition \eqref{IRLdef:intlower1} for $\mathrm{Re}(\mu)<0$, we have
\begin{align*}
\prescript{RL}{0}D_x^{\mu}[x^nf(x);y]&=\frac{1}{\Gamma(-\mu)}\int_0^{yx}(x-t)^{-\mu-1}f(t)\left[\sum_{k=0}^n\binom{n}{k}x^{n-k}(-1)^k(x-t)^k\right]\,\mathrm{d}t \\
&=\frac{1}{\Gamma(-\mu)}\sum_{k=0}^n\binom{n}{k}x^{n-k}(-1)^k\int_0^{yx}(x-t)^{-\mu+k-1}f(t)\,\mathrm{d}t \\
&=\sum_{k=0}^n\binom{n}{k}x^{n-k}(-1)^k\frac{\Gamma(-\mu+k)}{\Gamma(-\mu)}\prescript{RL}{0}D_x^{\mu-k}[f(x);y].
\end{align*}
This gives the result for lower incomplete fractional integrals ($\mathrm{Re}(\mu)<0$), which can easily be extended to all lower incomplete fractional differintegrals by analytic continuation. The proof for upper incomplete fractional differintegrals is exactly analogous.
\end{proof}

\begin{thm}[Incomplete fractional Leibniz rule]
\label{Leibniz}
Let $b>0$, $0<y<1$, $\mu\in\mathbb{C}$. For any function $f:[0,b]\rightarrow\mathbb{C}$ in the appropriate function spaces given by Definitions \ref{IRL:lowerdefn}, \ref{IRL:upperdefn1}, or \ref{IRL:upperdefn2}, and for any analytic function $g:[0,b]\rightarrow\mathbb{C}$, we have the following results:
\begin{align}
\label{Leibniz:lower} \prescript{RL}{0}D_x^{\mu}[f(x)g(x);y]&=\sum_{k=0}^{\infty}\binom{\mu}{k}\prescript{RL}{0}D_x^{\mu-k}[f(x);y]\prescript{RL}{0}D_x^kg(x); \\
\label{Leibniz:upper} \prescript{RL}{0}D_x^{\mu}\{f(x)g(x);y\}&=\sum_{k=0}^{\infty}\binom{\mu}{k}\prescript{RL}{0}D_x^{\mu-k}\{f(x);y\}\prescript{RL}{0}D_x^kg(x).
\end{align}
\end{thm}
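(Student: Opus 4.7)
The plan is to reduce the result to Lemma~\ref{Leibniz:lemma} (which handles products with monomials) via the Taylor expansion of the analytic factor $g$. I will describe the argument for the lower incomplete identity \eqref{Leibniz:lower}; the upper incomplete case \eqref{Leibniz:upper} proceeds identically using \eqref{Leibniz:lemma:upper}.

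Since $g$ is analytic on $[0,b]$, write $g(x)=\sum_{n=0}^{\infty}a_{n}x^{n}$ with $a_{n}=g^{(n)}(0)/n!$ and radius of convergence strictly greater than $b$. I would first treat the case $\mathrm{Re}(\mu)<0$, so that the operator is an honest integral. Inserting the Taylor series into the definition \eqref{IRLdef:intlower1} of $\prescript{RL}{0}D_{x}^{\mu}[f(x)g(x);y]$ and exchanging integral with sum gives
\[
\prescript{RL}{0}D_{x}^{\mu}[f(x)g(x);y]=\sum_{n=0}^{\infty}a_{n}\,\prescript{RL}{0}D_{x}^{\mu}[x^{n}f(x);y].
\]
Applying Lemma~\ref{Leibniz:lemma} termwise and swapping the order of the resulting double sum yields
\[
\sum_{k=0}^{\infty}(-1)^{k}\frac{\Gamma(-\mu+k)}{\Gamma(-\mu)}\,\prescript{RL}{0}D_{x}^{\mu-k}[f(x);y]\sum_{n=k}^{\infty}\binom{n}{k}a_{n}x^{n-k}.
\]
After reindexing $m=n-k$, the inner sum is recognised as the Taylor expansion of $g^{(k)}(x)/k!$ about the origin. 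Finally, the Pochhammer identity $(-\mu)_{k}=(-1)^{k}\mu(\mu-1)\cdots(\mu-k+1)$ gives
\[
\frac{(-1)^{k}\Gamma(-\mu+k)}{k!\,\Gamma(-\mu)}=\binom{\mu}{k},
\]
which delivers exactly \eqref{Leibniz:lower}.

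The main obstacle will be rigorously justifying the two interchanges (integral with series, and reordering of the double sum). For the first, I would combine Fubini's theorem with the operator norm bounds of Section~\ref{sec:rigour}: using $\|x^{n}f\|_{L^{1}[0,yb]}\leq (yb)^{n}\|f\|_{L^{1}[0,yb]}$ (or the analogous $L^{\infty}$ estimate), Theorems~\ref{IRL:L1lower} and~\ref{IRL:Linfty} yield bounds on $|\prescript{RL}{0}D_{x}^{\mu}[x^{n}f(x);y]|$ that grow only geometrically in $n$. Since the radius of convergence of $\sum a_{n}x^{n}$ strictly exceeds $b$, the coefficients $a_{n}$ decay fast enough for absolute convergence. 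The same geometric bound applied to $|a_{n}|\binom{n}{k}x^{n-k}$ justifies the double-sum reordering. Once \eqref{Leibniz:lower} is established for $\mathrm{Re}(\mu)<0$, the full statement follows for all $\mu\in\mathbb{C}$ by analytic continuation, since both sides are analytic in $\mu$: the left-hand side by the construction in Definition~\ref{IRLdef:derlower}, and the right-hand side term-by-term (the polynomial $\binom{\mu}{k}$ and the differintegral $\prescript{RL}{0}D_{x}^{\mu-k}[f(x);y]$ are both analytic in $\mu$, and the geometric-type estimate obtained above provides the uniform-on-compacta control needed to conclude analyticity of the sum).
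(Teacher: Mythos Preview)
Your proposal is correct but follows a genuinely different route from the paper. The paper expands $g(t)$ in a Taylor series centred at $x$ rather than at $0$:
\[
g(t)=g\bigl(x-(x-t)\bigr)=\sum_{k=0}^{\infty}\frac{(-1)^{k}}{k!}(x-t)^{k}\,g^{(k)}(x),
\]
and substitutes this directly into the integral \eqref{IRLdef:intlower1}. The powers $(x-t)^{k}$ combine immediately with the kernel $(x-t)^{-\mu-1}$, so after a single interchange of sum and integral one reads off $\prescript{RL}{0}D_{x}^{\mu-k}[f(x);y]$ in each term, with the derivatives $g^{(k)}(x)$ already in place. The coefficient $(-1)^{k}\Gamma(-\mu+k)/\bigl(k!\,\Gamma(-\mu)\bigr)$ is then converted to $\binom{\mu}{k}$ via the reflection formula, and analytic continuation extends the identity to all $\mu$. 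Your approach---expanding about $0$, invoking Lemma~\ref{Leibniz:lemma} termwise, then reordering a double sum to rebuild $g^{(k)}(x)$---is also valid and has the virtue of reducing the theorem cleanly to the lemma just proved, but it costs an extra interchange and more bookkeeping. One small caveat on your hypotheses: analyticity of $g$ on $[0,b]$ does not by itself force the Maclaurin series to have radius of convergence exceeding $b$ (e.g.\ $g(x)=1/(1+x^{2})$ on $[0,2]$), so that assumption needs to be stated explicitly or replaced; the paper's expansion about $x$ faces an analogous issue but with the milder requirement that the radius at $x$ exceed $x$.
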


\begin{proof}
Since $g$ is analytic, we can write \[g(t)=g(x-(x-t))=\sum_{k=0}^{\infty}\frac{(-1)^k}{k!}(x-t)^k\prescript{RL}{0}D_x^kg(x),\] where this series is locally uniformly convergent. Substituting this into the integral definition \eqref{IRLdef:intlower1} for the lower incomplete fractional integral ($\mathrm{Re}(\mu)<0$), we find:
\begin{align*}
\prescript{RL}{0}D_x^{\mu}[f(x)g(x);y]&=\frac{1}{\Gamma(-\mu)}\int_0^{yx}(x-t)^{-\mu-1}f(t)\left[\sum_{k=0}^{\infty}\frac{(-1)^k}{k!}(x-t)^k\prescript{RL}{0}D_x^kg(x)\right]\,\mathrm{d}t \\
&=\frac{1}{\Gamma(-\mu)}\sum_{k=0}^{\infty}\frac{(-1)^k}{k!}\prescript{RL}{0}D_x^kg(x)\int_0^{yx}(x-t)^{-\mu+k-1}f(t)\,\mathrm{d}t \\
&=\sum_{k=0}^{\infty}\frac{(-1)^k}{k!}\prescript{RL}{0}D_x^kg(x)\frac{\Gamma(-\mu+k)}{\Gamma(-\mu)}\prescript{RL}{0}D_x^{\mu-k}[f(x);y].
\end{align*}
(Note that we have used local uniform convergence of the Taylor series for $g$, in order to swap the order of summation and integration.) By the reflection formula for the gamma function, we have
\[\frac{\Gamma(-\mu+k)}{\Gamma(-\mu)}=\frac{\pi\sin(-\pi\mu)\Gamma(1+\mu)}{\pi\sin(\pi k-\pi\mu)\Gamma(1+\mu-k)}=\frac{(-1)^k\Gamma(1+\mu)}{\Gamma(1+\mu-k)},\]
which gives the desired result for lower incomplete fractional integrals. Once again, we can deduce the result for lower incomplete fractional derivatives by using analytic continuation, and then prove the result for upper incomplete fractional differintegrals in an entirely analogous fashion.
\end{proof}

\begin{thm}[Incomplete fractional chain rule]
\label{chain}
Let $b>0$, $0<y<1$, $\mu\in\mathbb{C}$. For any analytic composite function $f\circ g:[0,b]\rightarrow\mathbb{C}$, we have the following results:
\begin{align}
\label{chain:lower} \prescript{RL}{0}D_x^{\mu}[f(g(x));y]&=\sum_{k=0}^{\infty}\binom{\mu}{k}\frac{1-(1-y)^{k-\mu}}{\Gamma(1+k-\mu)}x^{k-\mu}\sum_{r=1}^k\frac{\mathrm{d}^rf(g(x))}{\mathrm{d}g(x)^r}\sum_{(r_1,\dots,r_k)}\Bigg[\prod_{j=1}^k\tfrac{j}{r_j!(j!)^{r_j}}\Big(\tfrac{\mathrm{d}^jg(x)}{\mathrm{d}x^j}\Big)^{r_j}\Bigg],
\\
\label{chain:upper} \prescript{RL}{0}D_x^{\mu}\{f(g(x));y\}&=\sum_{k=0}^{\infty}\binom{\mu}{k}\frac{(1-y)^{k-\mu}}{\Gamma(1+k-\mu)}x^{k-\mu}\sum_{r=1}^k\frac{\mathrm{d}^rf(g(x))}{\mathrm{d}g(x)^r}\sum_{(r_1,\dots,r_k)}\Bigg[\prod_{j=1}^k\tfrac{j}{r_j!(j!)^{r_j}}\Big(\tfrac{\mathrm{d}^jg(x)}{\mathrm{d}x^j}\Big)^{r_j}\Bigg],
\end{align}
where the innermost summation in each expression is taken over all $(r_1,\dots,r_k)\in\left(\mathbb{Z}^+_0\right)^m$ such that $\sum_jr_j=r$ and $\sum_jjr_j=k$.
\end{thm}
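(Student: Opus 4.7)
The plan is to reduce the chain rule to a combination of the Leibniz rule (Theorem \ref{Leibniz}) and the classical Faà di Bruno formula. The key observation is that the Leibniz rule permits one of its two factors to be an arbitrary element of the incomplete-RL function space while the other must be analytic. So, trivially, I factor $f(g(x)) = 1 \cdot f(g(x))$, treating the constant $1$ (which lies in each of the spaces of Definitions \ref{IRL:lowerdefn}, \ref{IRL:upperdefn1}, and \ref{IRL:upperdefn2}) as the ``incomplete factor'' and the composite $f \circ g$, analytic by hypothesis, as the analytic factor. Applying \eqref{Leibniz:lower} to this decomposition immediately yields
\[
\prescript{RL}{0}D_x^\mu[f(g(x)); y] = \sum_{k=0}^\infty \binom{\mu}{k}\, \prescript{RL}{0}D_x^{\mu-k}[1; y] \cdot \frac{\mathrm{d}^k}{\mathrm{d}x^k} f(g(x)),
\]
and analogously for the upper incomplete case via \eqref{Leibniz:upper}.

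Next I would evaluate $\prescript{RL}{0}D_x^{\mu-k}[1;y]$ by invoking Example \ref{power1} at $\lambda=0$. A short computation of the incomplete beta function $B_y(1,k-\mu) = \frac{1-(1-y)^{k-\mu}}{k-\mu}$, combined with the identity $(k-\mu)\Gamma(k-\mu)=\Gamma(1+k-\mu)$, produces exactly the coefficient $\frac{1-(1-y)^{k-\mu}}{\Gamma(1+k-\mu)}x^{k-\mu}$ appearing in \eqref{chain:lower}. For the upper incomplete analogue, the matching calculation gives $\prescript{RL}{0}D_x^{\mu-k}\{1;y\}=\frac{(1-y)^{k-\mu}}{\Gamma(1+k-\mu)}x^{k-\mu}$, matching \eqref{chain:upper}.

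The last ingredient is the classical Faà di Bruno formula for the ordinary integer-order derivative,
\[
\frac{\mathrm{d}^k}{\mathrm{d}x^k} f(g(x)) = \sum_{r=1}^k f^{(r)}(g(x)) \sum_{(r_1,\dots,r_k)} \frac{k!}{r_1! \cdots r_k!} \prod_{j=1}^k \left(\frac{g^{(j)}(x)}{j!}\right)^{r_j},
\]
with the inner sum taken over exactly the same index set as in the theorem statement. A simple bookkeeping rearrangement, using $\prod_{j=1}^k j = k!$ to absorb the combinatorial factor $k!$ into the product, rewrites the Faà di Bruno coefficient in the form $\prod_{j=1}^k \frac{j}{r_j!(j!)^{r_j}}$ demanded by \eqref{chain:lower} and \eqref{chain:upper}. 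Substituting this into the previous display and collecting terms then produces the claimed identities.

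The main obstacle I anticipate is the same one that arose in the proof of Theorem \ref{Leibniz}: justifying the interchange of the outer sum over $k$ with the integral defining the incomplete differintegral. This is already handled by the local uniform convergence of the Taylor expansion of the analytic function $f \circ g$, exactly as invoked in that earlier proof. The extension from $\mathrm{Re}(\mu) < 0$ to arbitrary complex $\mu$ then follows by the analytic continuation machinery developed in Section \ref{sec:rigour}, so no new analytical input beyond what was needed for the Leibniz rule is required.
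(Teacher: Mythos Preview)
Your proposal is correct and follows essentially the same approach as the paper: apply the Leibniz rule to the factorisation $f(g(x))=1\cdot f(g(x))$ with $f\circ g$ as the analytic factor, evaluate the incomplete differintegrals of $1$ via Example \ref{power1}, and substitute the classical Fa\`a di Bruno formula for $\frac{\mathrm{d}^k}{\mathrm{d}x^k}f(g(x))$. Your additional remarks on computing $B_y(1,k-\mu)$ explicitly and on rewriting the Fa\`a di Bruno coefficient as $\prod_{j=1}^k\frac{j}{r_j!(j!)^{r_j}}$ are accurate and simply spell out details the paper leaves implicit.
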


\begin{proof}
We apply Theorem \ref{Leibniz} to the product of the two functions $f\circ g(x)$ and $1$, where $f\circ g$ is analytic. This yields the following formulae:
\begin{align*}
\prescript{RL}{0}D_x^{\mu}[f(g(x));y]&=\sum_{k=0}^{\infty}\binom{\mu}{k}\prescript{RL}{0}D_x^{\mu-k}[1;y]\frac{\mathrm{d}^kf\circ g(x)}{\mathrm{d}x^k}; \\
\prescript{RL}{0}D_x^{\mu}\{f(g(x));y\}&=\sum_{k=0}^{\infty}\binom{\mu}{k}\prescript{RL}{0}D_x^{\mu-k}\{1;y\}\frac{\mathrm{d}^kf\circ g(x)}{\mathrm{d}x^k}.
\end{align*}
By Example \ref{power1}, we know that the incomplete fractional differintegrals of the constant function $1$ are given by:
\begin{align*}
\prescript{RL}{0}D_x^{\mu}[1;y]&=\frac{B_y(1,-\mu)}{\Gamma(-\mu)}x^{-\mu}=\frac{1-(1-y)^{-\mu}}{\Gamma(1-\mu)}x^{-\mu}; \\
\prescript{RL}{0}D_x^{\mu}\{1;y\}&=\frac{B_{1-y}(-\mu,1)}{\Gamma(-\mu)}x^{-\mu}=\frac{(1-y)^{-\mu}}{\Gamma(1-\mu)}x^{-\mu}.
\end{align*}
And by the classical Fa\`a di Bruno formula for repeated derivatives of a composite function, we also have
\begin{equation*}
\frac{\mathrm{d}^kf(g(x))}{\mathrm{d}x^k}=\sum_{r=1}^k\frac{\mathrm{d}^rf(g(x))}{\mathrm{d}g(x)^r}\sum_{(r_1,\dots,r_k)}\Bigg[\prod_{j=1}^k\tfrac{j}{r_j!(j!)^{r_j}}\Big(\tfrac{\mathrm{d}^jg(x)}{\mathrm{d}x^j}\Big)^{r_j}\Bigg],
\end{equation*}
where the inner summation is taken over all $(r_1,\dots,r_k)\in\left(\mathbb{Z}^+_0\right)^m$ such that $\sum_jr_j=r,\sum_jjr_j=k$.

Putting all of the above expressions together, we have the desired results.
\end{proof}

\section{Conclusions} \label{sec:concl}

In this work, we have performed a rigorous study and analysis of the recently defined incomplete fractional integrals of Riemann--Liouville type. Starting from the operators proposed in \cite{ozarslan-ustaoglu1}, we considered appropriate function spaces for their domain and range, and thence derived precise and rigorous definitions for these operators. We then considered how they interact with the standard differentiation operator, and deduced an extension of the definitions to incomplete fractional \textit{derivatives} as well as integrals.

Consideration of function spaces also yielded an unusual property of the lower incomplete fractional integral: acting on functions which are well-behaved on a small subinterval, it yields functions with larger domains of good behaviour. This extension property is a special feature of incomplete fractional calculus which may be useful in, for example, the theory of partial differential equations.

We also studied several important questions which are natural in any model of fractional calculus. Is a semigroup property satisfied? Are the fractional derivatives and integrals inverse to each other? How do they behave as the order of differintegration converges to zero? Is it possible to find fractional differintegrals for the product or composition of two functions? All of these questions are analysed and answered in the incomplete Riemann--Liouville fractional calculus, in order to flesh out the fundamentals of the theory.


\end{document}